\renewcommand{\Hat}{\widehat}
\newcommand{\eps}{\varepsilon}
\newcommand{\RR}{\mathbb{R}}
\newcommand{\CC}{\mathbb{C}}
\newcommand{\NN}{\mathbb{N}}
\newcommand{\cD}{\mathcal{D}}
\newcommand{\cH}{\mathcal{H}}
\newcommand{\cG}{\mathcal{G}}
\newcommand{\cB}{\mathcal{B}}
\newcommand{\cL}{\mathcal{L}}
\newcommand{\cK}{\mathcal{K}}
\newcommand{\cA}{\mathcal{A}}
\newcommand{\ELL}{\mathcal{L}}
\DeclareMathOperator{\Id}{Id}
\newtheorem{theorem}{Theorem}
\newtheorem*{theorem*}{Theorem}
\newtheorem{prop}[theorem]{Proposition}
\newtheorem{lemma}[theorem]{Lemma}
\newtheorem{corol}[theorem]{Corollary}
\theoremstyle{remark}
\newtheorem{remark}[theorem]{Remark}
\theoremstyle{definition}
\newtheorem{defin}[theorem]{Definition}
\newcommand{\slim}{\mathop{\text{s-lim}}}
\DeclareMathOperator{\supp}{supp}
\DeclareMathOperator{\spec}{spec}
\DeclareMathOperator{\dom}{dom}
\DeclareMathOperator{\ran}{ran}
\newcommand{\uhr}{\mathbin{\lceil}}
\begin{document}

\title{\bf An example of unitary equivalence between self-adjoint extensions and their parameters}

\author{{\sc Konstantin Pankrashkin}\\[\bigskipamount]
Laboratoire de math\'ematiques -- UMR 8628\\
Universit\'e Paris-Sud 11, B\^atiment 425\\
91400 Orsay, France\\
Homepage: \url{http://www.math.u-psud.fr/~pankrash/}\\
E-mail: \url{konstantin.pankrashkin@math.u-psud.fr}
}

\date{}
\maketitle

\begin{abstract}
\noindent The spectral problem for self-adjoint extensions is studied using the machinery of boundary triplets.
For a class of symmetric operators having Weyl functions of a special type we calculate explicitly
the spectral projections in the form of operator-valued integrals.
This allows one to give a constructive proof of the fact that, in certain intervals,
the resulting self-adjoint extensions are unitarily equivalent to the parameterizing boundary operator
acting in a smaller space, and one is able to provide an explicit form for the associated unitary transform.
Applications to differential operators on metric graphs and to direct sums are discussed.

\medskip

\noindent {\bf Keywords:} self-adjoint extension, unitary equivalence, metric graph, Laplacian, boundary conditions, boundary triplet, Weyl function

\medskip

\noindent {\bf MSC 2010 classification:} 47A56, 47B25, 34B45.
\end{abstract}

\section{Introduction}

Let $S$ be a closed densely defined symmetric operator
in a separable Hilbert space $\cH$ with equal deficiency indices
$n_\pm(S)=n$; it is a well known fact that the self-adjoint extensions
of $S$ are then parameterized by the unitary operators or self-adjoint linear relations
in the $n$-dimensional Hilbert space \cite{GG}.
From the point of view of quantum mechanics, the choice of a particular self-adjoint extension
can be viewed as a quantization problem, as different extensions correspond to Hamiltonians
defining different quantum dynamics \cite{gitman}, and understanding the role of the extension
parameters
in the properties of the associated Hamiltonians is one of the central problems.
Questions of the control of the point spectrum in terms of parameters
were addressed already by Krein \cite{krein} for the case of finite deficiency indices,
and it was then extended in a series of papers to more sophisticated situations
including the study of singular and absolutely continuous spectra, see the discussion
in \cite{ABN,jfb}. The progress in the spectral analysis of self-adjoint extensions
was mainly possible due to the concept
of boundary triplet and associated Weyl functions \cite{GG,DM,DM2}, whose main
points we briefly recall now. A \emph{boundary triplet} \cite{GG} for $S$ consists of an auxiliary
Hilbert space $\cG$
and two linear maps $\Gamma,\Gamma':\dom S\to\cG$ satisfying the following two conditions:
\begin{itemize}
\item $\langle f,S^*g\rangle_\cH-\langle S^*f,g\rangle_\cH =\langle \Gamma f,\Gamma'g\rangle_\cG-\langle \Gamma'f,\Gamma g\rangle_\cG$
for all $f,g\in\dom S^*$,
\item the application $(\Gamma,\Gamma'):\dom S^*\ni f \mapsto (\Gamma f,\Gamma' f)\in \cG\oplus\cG$
is surjective.
\end{itemize}
The boundary triplets provide an efficient way of dealing with the self-adjoint extensions
of the operator $S$. We restrict ourselves by considering two distinguished self-adjoint extensions, namely,
\[
H^0:=S^*\uhr{\ker \Gamma},
\quad
H:=S^*\uhr{\ker\Gamma'};
\]
here and below $A\uhr L$ denotes the restriction of $A$ to $L$.
In our setting $H^0$ will be considered as a reference operator, and $H$ is viewed as its perturbation, and 
we will show below that various situations are reduced to the above case,
and the parameters may be included into the boundary triplet.

An essential role in the analysis of the self-adjoint
extensions is played by the so-called Weyl function $M(z)$ which is defined as follows \cite{DM}.
It is known that for $z\notin\spec H^0$ the operator
\[
\gamma(z):=\big(\Gamma\uhr{\ker(S^*-z)}\big)^{-1}
\]
is well defined, and it is a linear topological isomorphism between $\cG$ and $\ker(S^*-z)\subset\cH$.
The map
\[
\CC\setminus\spec H^0\ni z\mapsto \gamma(z)\in \cL(\cG,\cH),
\]
which is usually called the $\gamma$-field, is holomorph.
The holomorph operator function 
\[
\CC\setminus\spec H^0\ni z\mapsto M(z):=\Gamma'\gamma(z)\in \cL(\cG)
\]
is referred to as the Weyl function associated with the boundary triplet.
The following result is well known \cite{DM}:
\begin{theorem}
      \label{th1}
For any $z\notin \spec H^0$ there holds
\begin{equation}
      \label{eq-ker}
\ker(H-z)=\gamma(z)\ker M(z).
\end{equation}
Moreover, for $z\notin \spec H^0\cup\spec H$ the Krein resolvent formula holds,
\begin{equation}
        \label{eq-krein}
(H-z)^{-1}=(H^0-z)^{-1}-\gamma(z)M(z)^{-1}\gamma(\Bar z)^*.
\end{equation}
\end{theorem}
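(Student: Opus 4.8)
The plan is to prove both formulas by working with the abstract Green's identity and the definitions of the $\gamma$-field and Weyl function, without appealing to any deeper structure theory. I would begin with \eqref{eq-ker}. For the inclusion $\gamma(z)\ker M(z)\subseteq\ker(H-z)$, take $\varphi\in\ker M(z)$ and set $f:=\gamma(z)\varphi$. By construction $f\in\ker(S^*-z)$, so $(S^*-z)f=0$; since $H=S^*\uhr\ker\Gamma'$, it remains only to check that $f\in\dom H$, i.e.\ that $\Gamma'f=0$. But $\Gamma'f=\Gamma'\gamma(z)\varphi=M(z)\varphi=0$ by the definition of $M(z)$ and the assumption $\varphi\in\ker M(z)$. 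Conversely, if $f\in\ker(H-z)$ then $f\in\ker(S^*-z)$ and $\Gamma'f=0$; setting $\varphi:=\Gamma f$ we have $f=\gamma(z)\varphi$ because $\gamma(z)=(\Gamma\uhr\ker(S^*-z))^{-1}$, and then $M(z)\varphi=\Gamma'\gamma(z)\varphi=\Gamma'f=0$, so $\varphi\in\ker M(z)$ and $f\in\gamma(z)\ker M(z)$.

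For the resolvent formula \eqref{eq-krein} I would verify directly that the right-hand side sends any $g\in\cH$ into $\dom H$ and is inverted by $H-z$. Write $f_0:=(H^0-z)^{-1}g$ and let $f:=f_0-\gamma(z)M(z)^{-1}\gamma(\bar z)^*g$ be the candidate for $(H-z)^{-1}g$. Since $\gamma(z)$ maps into $\ker(S^*-z)$, the correction term lies in the kernel of $S^*-z$, so $(S^*-z)f=(S^*-z)f_0=(H^0-z)f_0=g$; thus $f$ solves the inhomogeneous equation and the only point to settle is that $f\in\dom H=\ker\Gamma'$. Applying $\Gamma'$ gives $\Gamma'f=\Gamma'f_0-\Gamma'\gamma(z)M(z)^{-1}\gamma(\bar z)^*g=\Gamma'f_0-M(z)M(z)^{-1}\gamma(\bar z)^*g=\Gamma'f_0-\gamma(\bar z)^*g$, so the claim reduces to the identity $\Gamma'(H^0-z)^{-1}g=\gamma(\bar z)^*g$ for all $g$.

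The main obstacle, and the heart of the argument, is therefore this last identity; I would prove it from the abstract Green's identity. Fix $g\in\cH$ and $\psi\in\cG$, and let $h:=\gamma(\bar z)\psi\in\ker(S^*-\bar z)$, so $\Gamma h=\psi$ and $S^*h=\bar z\,h$. Put $u:=(H^0-z)^{-1}g$, which lies in $\ker\Gamma$ with $(S^*-z)u=g$. Feeding $f=u$, $g=h$ into the Green's identity $\langle u,S^*h\rangle-\langle S^*u,h\rangle=\langle\Gamma u,\Gamma'h\rangle-\langle\Gamma'u,\Gamma h\rangle$ and using $\Gamma u=0$, $S^*h=\bar z\,h$, $\Gamma h=\psi$, the left side becomes $\langle u,\bar z\,h\rangle-\langle S^*u,h\rangle=-\langle(S^*-z)u,h\rangle=-\langle g,h\rangle$, while the right side collapses to $-\langle\Gamma'u,\psi\rangle$. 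Hence $\langle\Gamma'u,\psi\rangle_\cG=\langle g,h\rangle_\cH=\langle g,\gamma(\bar z)\psi\rangle_\cH=\langle\gamma(\bar z)^*g,\psi\rangle_\cG$ for every $\psi\in\cG$, which is exactly $\Gamma'(H^0-z)^{-1}g=\gamma(\bar z)^*g$. Care is needed with the antilinearity of the inner product and with the role of $\bar z$ versus $z$ in choosing $h$, but these are the only delicate bookkeeping points; everything else is a direct substitution.
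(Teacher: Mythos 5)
Your two-part verification is essentially correct, but note first how it relates to the paper: the paper does not prove Theorem \ref{th1} at all --- it is quoted as a well-known result with a citation to Derkach--Malamud \cite{DM}. So your proposal is in effect a reconstruction of the standard proof from that literature, and the main steps are right. The kernel relation \eqref{eq-ker} is exactly the exchange between $\Gamma'$, $\gamma(z)$ and $M(z)=\Gamma'\gamma(z)$ that you perform, using that $\gamma(z)$ is a bijection from $\cG$ onto $\ker(S^*-z)$. The heart of \eqref{eq-krein} is indeed the identity $\Gamma'(H^0-z)^{-1}=\gamma(\bar z)^*$, and your derivation of it from the abstract Green's identity with $u=(H^0-z)^{-1}g\in\ker\Gamma$ and $h=\gamma(\bar z)\psi\in\ker(S^*-\bar z)$ is correct; one can check that the cancellation $\langle u,S^*h\rangle-\langle S^*u,h\rangle=-\langle g,h\rangle$ comes out the same in either inner-product convention, because $(S^*-\bar z)h=0$ kills the cross terms. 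Once $\Gamma'f=0$ is established for the candidate $f$, the conclusion $f=(H-z)^{-1}g$ follows since $H-z$ is bijective for $z\notin\spec H$, as you say.

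The one genuine loose end is the bounded invertibility of $M(z)$, which you assume silently when writing $M(z)^{-1}$. In the standard formulation the assertion that $0\notin\spec M(z)$ for $z\notin\spec H^0\cup\spec H$ is part of the theorem, not a hypothesis --- without it the right-hand side of \eqref{eq-krein} is not even defined, and the paper immediately relies on this when deriving \eqref{eq-hspec}. Fortunately the gap closes with tools you have already set up. Injectivity of $M(z)$ is immediate from your first part: $\gamma(z)\ker M(z)=\ker(H-z)=\{0\}$ and $\gamma(z)$ is injective, so $\ker M(z)=\{0\}$. For surjectivity, run your Green's-identity computation in reverse: for $g\in\cH$ set $w:=(H-z)^{-1}g-(H^0-z)^{-1}g$; both resolvents solve $(S^*-z)\,\cdot\,=g$, so $w\in\ker(S^*-z)$ and hence $\Gamma'w=M(z)\Gamma w$. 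Since $\Gamma'(H-z)^{-1}g=0$ and $\Gamma'(H^0-z)^{-1}g=\gamma(\bar z)^*g$ by your key identity, this yields $M(z)\,\Gamma(H-z)^{-1}g=-\gamma(\bar z)^*g$. Because $\gamma(\bar z)$ is a topological isomorphism onto $\ker(S^*-\bar z)$, its adjoint $\gamma(\bar z)^*$ maps $\cH$ onto $\cG$, so $\ran M(z)=\cG$; the bounded inverse theorem then gives $M(z)^{-1}\in\cL(\cG)$. With that paragraph added, your proof is complete and matches the standard argument the paper points to.
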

A direct consequence of the resolvent formula \eqref{eq-krein} is the relation
\begin{equation}
        \label{eq-hspec}
\spec H\setminus\spec H^0=\Big\{
z\notin\spec H^0:\, 0\in\spec M(z)
\Big\}.
\end{equation}
Numerous papers were dedicated to the study of possible refinements of \eqref{eq-hspec}
in order to recover the spectral nature of $H$ in terms of the Weyl function, see e.g. the discussion in~\cite{ABMN,jfb,BMN02,DM,DM2,MN1}.
Such an analysis becomes deeper if some additional properties
of $H^0$ and $M$ are available. In the present paper we assume that
the following two conditions hold:
\begin{itemize}
\item the operator $H^0$ has a spectral gap, i.e. there exists
an open interval $J:=(a_0,b_0)\subset \RR\setminus\spec H^0$, and
\item the Weyl function $M$ can be represented as
\begin{equation}
          \label{eq-mspec}
M(z)=\dfrac{m(z)-T}{n(z)},
\end{equation}
where $T$ is a bounded self-adjoint operator in $\cG$ 
and $m$, $n$ are scalar
functions which are holomorph outside $\spec H^0$, with $n$ non-vanishing in $J$.
\end{itemize}
Such situations arise in several important applications such as the analysis of differential operators
on metric graphs, this will  be reviewed in Section \ref{sec-appl}.
In this case, Eq. \eqref{eq-hspec} gives a rather simple characterization of the spectrum of $H$
in $J$:
\[
\spec H\cap J=\big\{ \lambda\in J:\, m(\lambda)\in\spec T\big\}.
\]
This relation was refined in \cite{BGP08} in order to describe
all spectral types of $H$:
\begin{equation}
       \label{eq-spss}
\spec_\star H\cap J=\big\{ \lambda\in J:\, m(\lambda)\in\spec_\star T\big\}, \quad
\star\in\{\text{p},\text{pp},\text{disc},\text{ess},\text{ac},\text{sc}\}.
\end{equation}
A further improvement was obtained first in \cite{ABMN} for the particular case $n=\text{const}$
and then extended in \cite{KP11} to general denominators $n$, and we need to introduce
an additional notation to formulate it. Let $\cB(\RR)$ denote the set of the borelian subsets of $\RR$.
For a self-adjoint operator $A$ acting in a Hilbert space $\cH$ we denote
by $E_A:\cB(\RR)\to \cL(\cH)$ the operator-valued spectral measure associated with $A$,
i.e. for any $\Omega\in \cB(\RR)$ we have $E_A(\Omega):=1_\Omega(A)$, where
$1_\Omega:\RR\to\RR$ is the indicator function of $\Omega$. For the same $\Omega$,
we denote by $A_\Omega$ the operator $A E_A(\Omega)$ viewed as a self-adjoint operator
in the Hilbert space $\ran E_A(\Omega)$ equiped with the induced scalar product.
Using this notation, the main result of \cite{ABMN,KP11}
can be formulated as follows: If the Weyl function
admits the representation \eqref{eq-mspec}, then $m:J\cap\spec H\to m(J\cap\spec H)$
is a bijection, and the operator $H_J$ is unitarily equivalent to $m^{-1}(T_{m(J)})$.
This indeed implies the spectral relations \eqref{eq-spss}.

One has to emphasize that the approach suggested in \cite{ABMN} and then used in \cite{KP11}
was rather implicit. In particular, the unitary equivalence of the two operators
was shown using the so-called generalized Naimark theorem
on minimal orthogonal dilations of operator-valued measures \cite{MM}, and
no information on the unitary transform was obtained.
Finding this transform
in an explicit form is the subject of the present work.
The main results of the present paper can be summarized as follows (see Theorem \ref{thm10} and Corollary \ref{corol11}):
\begin{itemize}
\item for any borelian subset $\Omega\subset J$ one has the equality
$E_H(\Omega)= U E_T\big(m(\Omega)\big)U^*$,
where $U$ is the operator given by
\[
U = \int_J \sqrt{\dfrac{n(\lambda)}{m'(\lambda)}}\, \gamma(\lambda) dE_T\big(m(\lambda)\big)
\]
understood as an improper operator-valued Riemann-Stieltjes integral (the precise meaning will be discussed in detail in Sections \ref{sec-proj}
and \ref{sec-unt}).
\item Moreover, $U$ viewed as a map from $\ran E_T\big(m(J)\big)$ to $\ran E_H(J)$
is unitary, and $m(H_J)= U T_{m(J)} U^*$.
\end{itemize}
As we will see below, in various situations the operator $T$ plays the role of a parameter
of self-adjoint extensions, so the above results provide a direct translation of the spectral properties
of the parameters to the spectral properties of the associated self-adjoint extensions.

We believe that the knowledge of a certain explicit form for the unitary transform $U$ relating
the operators $H$ and $T$ provides a considerable progress compared to the previously known results.
In particular, it allows one to reduce the functional calculus for $H$ to that for $T$ and,
for example, to make a link between various evolution problems associated with the two operators. 
It should be emphasized that the approach used in the present paper differs from the one employed
in the previous works \cite{ABMN,KP11},
we calculate the spectral projections for $H$ in a rather direct way using the limit values of the resolvent,
which allows one
to write an explicit formula for the unitary transform in question.
While the situation we look at is indeed very special, we show below that it admits
some useful applications in mathematical physics, see Section~\ref{sec-appl}.
Moreover, we are not aware of any previous work giving any explicit expression
for the spectral projections of suitably large classes of self-adjoint extensions in a closed form,
and we believe that the approach presented here may admit some generalizations to more sophisticated
Weyl functions and might shed a new light on the spectral analysis and the functional calculus of self-adjoint extensions.

\section{Basic formula for spectral projections}\label{sec-proj}

The aim of the present section is to express the spectral projections for $H$ in terms
of the spectral measure for  $T$. The main formula is given in Lemma \ref{prop3}, and it will be used
for the subsequent spectral analysis. Throughout the rest of the paper we use the notation
\[
S_T:=[\inf\spec T,\sup\spec T]\subset\RR.
\]

The following simple properties of $m$ and $n$ were proved in \cite[Section 2.2]{KP11}:
\begin{lemma} \label{lem1}
Denote $K:=m^{-1}(S_T)\cap J$, then
\begin{itemize}
\item $n(x)\cdot m'(x)> 0$ for all $x\in K$,
\item $K$ is a connected set.
\end{itemize}
\end{lemma}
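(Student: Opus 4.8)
The plan is to derive both assertions from the fact that the Weyl function $M$ of a boundary triplet is an operator-valued Herglotz (Nevanlinna) function: it is holomorphic on $\CC\setminus\spec H^0$, satisfies $M(\bar z)=M(z)^*$, and obeys the standard identity $M(z)-M(w)^*=(z-\bar w)\gamma(w)^*\gamma(z)$; in particular, for real $x\in J$ the operator $M(x)$ is self-adjoint and $M$ is real-analytic there, with derivative $M'(x)=\gamma(x)^*\gamma(x)$. Since $\gamma(x)$ is a topological isomorphism onto $\ker(S^*-x)$, it is injective, so $M'(x)$ is a positive-definite operator for every $x\in J$.

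First I would prove $n(x)m'(x)>0$ on $K$ by reducing $M$ to scalar Herglotz functions. Fix $x\in K$ and set $t_0:=m(x)\in S_T$. Because $S_T$ is the closed convex hull of $\spec T$, it coincides with the closure of the numerical range of $T$, so there are unit vectors $\psi_k\in\cG$ with $\langle T\psi_k,\psi_k\rangle\to t_0$. Each scalar function $z\mapsto\langle M(z)\psi_k,\psi_k\rangle=(m(z)-\langle T\psi_k,\psi_k\rangle)/n(z)$ is Herglotz, and since $n$ does not vanish these converge, locally uniformly off $\spec H^0$, to $M_{t_0}(z):=(m(z)-t_0)/n(z)$; as the Herglotz class is closed under such limits, $M_{t_0}$ is itself scalar Herglotz and holomorphic on $J$. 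A Herglotz function holomorphic across a real interval has strictly positive derivative there unless it is constant; and $M_{t_0}$ cannot be constant, since $M_{t_0}(x)=(m(x)-t_0)/n(x)=0$ would then force $M_{t_0}\equiv0$, i.e. $m\equiv t_0$, contrary to $m$ being non-constant. Hence $M_{t_0}'(x)>0$, and a direct computation gives $M_{t_0}'(x)=m'(x)/n(x)$, which is exactly $n(x)m'(x)>0$.

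For connectedness I would first note that, $J$ being an interval and $n$ being continuous and non-vanishing there, $n$ has constant sign on $J$; combined with the first part this shows $m'$ has constant sign on $K$. Assume $n>0$ on $J$ (the case $n<0$ is symmetric, e.g. by replacing $m$ with $-m$), so that $m'>0$ at every point of $K$. Suppose $K$ were disconnected: then one finds $x_1<x^*<x_2$ in $J$ with $x_1,x_2\in K$ but $m(x^*)\notin S_T$. If $m(x^*)>\sup\spec T$, the intermediate value theorem (using $m(x_2)\le\sup\spec T$) produces $c:=\inf\{x\ge x^*:\ m(x)=\sup\spec T\}\in(x^*,x_2]$; on $[x^*,c)$ one has $m>\sup\spec T=m(c)$, so the left derivative satisfies $m'(c)\le0$, contradicting $c\in K$ and $m'(c)>0$. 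The case $m(x^*)<\inf\spec T$ is handled symmetrically with $c:=\sup\{x\le x^*:\ m(x)=\inf\spec T\}$. This contradiction shows $K$ is connected.

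I expect the positivity statement to be the main obstacle, since it is where the analytic input is needed: the Herglotz nature of $M$, the strict monotonicity of non-constant Herglotz functions on real intervals, and the slightly delicate treatment of the endpoints of $S_T$ via the numerical range together with an approximation argument. Once $n(x)m'(x)>0$ is available on $K$, the connectedness follows from elementary real-variable reasoning.
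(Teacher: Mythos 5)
The paper does not actually prove Lemma \ref{lem1}; it cites \cite[Section 2.2]{KP11}, where the first bullet is obtained quantitatively from the identity $M'(x)=\gamma(x)^*\gamma(x)$ together with the fact that $\gamma(x)$ is bounded below, and the connectedness is an elementary monotonicity argument. Your second half (connectedness) is exactly that argument and is correct: constancy of the sign of $n$ on $J$, hence of $m'$ on $K$, plus the first-exit point $c$ with $m(c)=\sup\spec T$ and the left-derivative contradiction $m'(c)\le 0$, is airtight (the only cosmetic slip is ``replacing $m$ with $-m$'': to preserve the representation \eqref{eq-mspec} one must change the signs of $T$, $m$ and $n$ simultaneously, as the paper itself does in the proof of Lemma \ref{prop3}). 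Your first half is also essentially sound but takes a detour: the reduction of $S_T$ to the closure of the numerical range and the pairing $\langle M(z)\psi_k,\psi_k\rangle=(m(z)-t_k)/n(z)$ are the right moves, and the scalar Herglotz limit $M_{t_0}$ is legitimately Herglotz, real on $J$ (realness should be said explicitly; it follows from $M(x)=M(x)^*$ for $x\in J$, by passing to the limit in the real numbers $(m(x)-t_k)/n(x)$), so the Hopf-type strict monotonicity statement applies once non-constancy is known.

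The one genuine soft spot is precisely there: you dismiss the constant case as ``contrary to $m$ being non-constant,'' but non-constancy of $m$ is not among the paper's hypotheses, so as written this step is unsupported. It is repairable in two ways. First, one can rule out $m\equiv t_0$ directly: then $\Im M(z)=(t_0-T)\,\Im\big(1/n(z)\big)$, while \eqref{eq-mgam} with $z_1=z_2=z$ gives $\Im M(z)=\Im z\,\gamma(z)^*\gamma(z)$, which is boundedly invertible because $\gamma(z)$ is a topological isomorphism onto its range; hence $t_0-T$ would be sign-definite and boundedly invertible, forcing $t_0\notin S_T$ and contradicting $t_0=m(x)\in S_T$. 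Second, and better, your own preamble already contains a fact strong enough to make the whole Herglotz detour (and the non-constancy issue) unnecessary: since $\gamma(x)$ is a topological isomorphism onto the closed subspace $\ker(S^*-x)$, it is bounded below, so $M'(x)=\gamma(x)^*\gamma(x)\ge \eps\,\Id$ for some $\eps>0$, not merely injective-positive. Pairing with your vectors $\psi_k$ and using $M'(x)=\dfrac{m'(x)}{n(x)}-\dfrac{n'(x)}{n(x)^2}\big(m(x)-T\big)$ yields $\big\langle M'(x)\psi_k,\psi_k\big\rangle=\dfrac{m'(x)}{n(x)}-\dfrac{n'(x)}{n(x)^2}\big(t_0-\langle T\psi_k,\psi_k\rangle\big)\to \dfrac{m'(x)}{n(x)}\ge\eps>0$, which is the first bullet in three lines. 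Note that the weaker property you actually invoked (positive definiteness in the sense of injectivity) would only give $m'(x)/n(x)\ge 0$ after the limit, which is exactly why your draft needed the Herglotz machinery; with the bounded-below version the quantitative argument closes on its own and matches the proof in \cite{KP11}.
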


The maps $\gamma(z)$ and $M(z)$ satisfy a number of identities, see e.g. \cite{BGP08,DM}.
In particular, 
\begin{gather}
      \label{eq-mconj}
M(z)^*=M(\overline z), \quad z\notin\spec H^0,\\
       \label{eq-mgam}
M(z_1)-M(\overline{z_2})=(z_1-\overline{z_2})\gamma(z_2)^*\gamma(z_1), \quad
z_1,z_2\notin\spec H^0,\\
\Im M(z)>0 \text{ for } \Im z>0. \nonumber
\end{gather}
The last property means that $M$ is a strict operator-valued Nevanlinna-Herglotz function.
Recall that for any operator-valued Nevanlinna-Herglotz function $F$
in $\cG$ and any $f\in\cG$, the map 
\[
\CC_+:=\{z\in\CC:\, \Im z>0\}\ni z\mapsto F_f(z):= \big\langle f,F(z)f\big\rangle_\cG
\]
is a scalar Nevanlinna-Herglotz
function (called also $R$-function), hence for almost all
$x\in\RR$ there exists a finite  limit
$\lim_{\varepsilon\to 0+} \big\langle f,F(x+i\varepsilon)f\big\rangle_\cG$.
Using the polar identity we see that,
for any $f,g\in\cG$, the limit $\lim_{\varepsilon\to 0+} \big\langle f,F(x+i\varepsilon)g\big\rangle_\cG$
is finite for almost all $x\in\RR$.

The following proposition will not be used in the rest of the text, but
it provides us  with a certain intuition in the study of the spectral projections for $H$.
\begin{prop}\label{prop1}
For any $\lambda\in J\mathop{\cap}\spec_\mathrm{p} H$ there holds
\[
E_H\big(\{\lambda\}\big)=\dfrac{n(\lambda)}{m'(\lambda)}\,\gamma(\lambda)E_T\Big(\big\{m(\lambda)\big\}\Big)\gamma(\lambda)^*.
\]
\end{prop}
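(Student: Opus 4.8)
The plan is to compute $E_H(\{\lambda\})$ via Stone's formula, which expresses the spectral projection onto a single eigenvalue $\lambda$ as a limit of the imaginary part of the resolvent. Concretely, I would start from
\[
E_H\big(\{\lambda\}\big)=\slim_{\eps\to 0+}\dfrac{\eps}{\pi}\int_{\lambda-\delta}^{\lambda+\delta}\Im (H-x-i\eps)^{-1}\,dx
\]
for $\delta$ small enough that $[\lambda-\delta,\lambda+\delta]\cap\spec H=\{\lambda\}$, or more directly use the fact that $E_H(\{\lambda\})=\slim_{\eps\to0+}(-i\eps)(H-\lambda-i\eps)^{-1}$ for an isolated eigenvalue. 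Substituting the Krein resolvent formula \eqref{eq-krein}, the reference term $(H^0-z)^{-1}$ stays bounded as $z\to\lambda\in J\subset\RR\setminus\spec H^0$ and hence contributes nothing in the limit, so everything is governed by the correction term $-\gamma(z)M(z)^{-1}\gamma(\bar z)^*$.

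Next I would insert the special representation \eqref{eq-mspec}, $M(z)=\big(m(z)-T\big)/n(z)$, giving $M(z)^{-1}=n(z)\big(m(z)-T\big)^{-1}$. Thus the singular part of the resolvent near $\lambda$ is
\[
-(-i\eps)\,\gamma(z)\,n(z)\,\big(m(z)-T\big)^{-1}\gamma(\bar z)^*,\qquad z=\lambda+i\eps,
\]
and the task reduces to understanding the scalar-type singularity of $\big(m(z)-T\big)^{-1}$ at the point where $m(\lambda)\in\spec_{\mathrm p}T$. Since $\gamma(z)$ and $n(z)$ are holomorphic at $z=\lambda$, I can replace them by their values $\gamma(\lambda)$ and $n(\lambda)$ in the limit, using continuity of the $\gamma$-field and joint continuity together with the boundedness of the remaining factor on the relevant spectral subspace. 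The heart of the matter is then to show
\[
\slim_{\eps\to0+}(-i\eps)\big(m(\lambda+i\eps)-T\big)^{-1}=\dfrac{1}{m'(\lambda)}\,E_T\big(\{m(\lambda)\}\big),
\]
which follows by writing $m(\lambda+i\eps)-m(\lambda)\approx m'(\lambda)\,i\eps$ via a first-order Taylor expansion and applying the spectral theorem for $T$: on the eigenspace $\ran E_T(\{m(\lambda)\})$ the factor $(m(z)-T)^{-1}$ blows up like $1/(m(z)-m(\lambda))\sim 1/(m'(\lambda)i\eps)$, while on its orthogonal complement the resolvent stays bounded and is killed by the prefactor $(-i\eps)$. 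Here Lemma \ref{lem1} guarantees $m'(\lambda)\neq 0$ (indeed $n(\lambda)m'(\lambda)>0$), so the expansion is nondegenerate and the combination $n(\lambda)/m'(\lambda)$ is well defined and positive.

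Assembling these pieces yields $E_H(\{\lambda\})=\dfrac{n(\lambda)}{m'(\lambda)}\,\gamma(\lambda)\,E_T(\{m(\lambda)\})\,\gamma(\lambda)^*$, where I have used $\gamma(\bar z)^*=\gamma(\lambda)^*$ in the limit. The main obstacle I anticipate is the rigorous justification of the interchange of the limit $\eps\to0+$ with the operator products, especially controlling the contribution of the continuous and non-resonant parts of $\spec T$: one must verify that $(-i\eps)(m(\lambda+i\eps)-T)^{-1}$ converges strongly to a multiple of the eigenprojection and not merely weakly, and that the error in replacing $m(\lambda+i\eps)-m(\lambda)$ by $m'(\lambda)i\eps$ remains negligible uniformly on the spectrum of $T$ after multiplication by the boundary operators. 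Since $\lambda$ is assumed to be in $\spec_{\mathrm p}H$, the compatibility relation \eqref{eq-ker} together with \eqref{eq-spss} ensures $m(\lambda)$ is genuinely an eigenvalue of $T$, so the projection $E_T(\{m(\lambda)\})$ is nonzero and the formula is consistent.
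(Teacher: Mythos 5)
Your strategy is viable but genuinely different from the paper's. The paper proves Proposition \ref{prop1} purely algebraically, with no limiting procedure at all: by \eqref{eq-ker} and \eqref{eq-mspec} the map $\gamma(\lambda)$ sends $\ker\big(T-m(\lambda)\big)$ bijectively onto $\ker(H-\lambda)$, and the identity \eqref{eq-mgam} gives $\gamma(\lambda)^*\gamma(\lambda)=M'(\lambda)$, which on $\ker\big(T-m(\lambda)\big)$ reduces to $m'(\lambda)/n(\lambda)$ because the term proportional to $m(\lambda)-T$ is annihilated there; hence $\xi\mapsto\sqrt{n(\lambda)/m'(\lambda)}\,\gamma(\lambda)\xi$ is an isometry of the eigenspaces and the projection formula drops out. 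Your resolvent-limit argument via the Krein formula \eqref{eq-krein} is, in spirit, exactly the method the paper reserves for the much harder interval statement (Lemma \ref{prop3}, via the Stone formula); specializing it to a point mass is legitimate, and what it buys is uniformity of method across point and continuous spectrum, at the cost of analytic overhead that the paper's two-line algebraic proof avoids entirely.

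That said, three local defects need repair. First, your opening display is not a correct formula: with the prefactor $\eps/\pi$ in front of the integrated Stone formula the strong limit is $0$, not $E_H(\{\lambda\})$. The alternative you then actually use, $E_H(\{\lambda\})=\slim_{\eps\to 0+}(-i\eps)(H-\lambda-i\eps)^{-1}$, is the right starting point, and, contrary to your hedge, it is valid for \emph{every} real $\lambda$, isolated or not; this matters because eigenvalues of $H$ in $J$ need not be isolated, since $\spec T$ is arbitrary. Second, a sign inconsistency: combining your two displayed claims literally yields $-\frac{n(\lambda)}{m'(\lambda)}\gamma(\lambda)E_T(\{m(\lambda)\})\gamma(\lambda)^*$. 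Since $m(\lambda+i\eps)-m(\lambda)=i\eps\, m'(\lambda)+O(\eps^2)$, the correct key limit is $\slim_{\eps\to 0+}\,(+i\eps)\big(m(\lambda+i\eps)-T\big)^{-1}=\frac{1}{m'(\lambda)}E_T\big(\{m(\lambda)\}\big)$, and it is this that, inserted into the term $+i\eps\,\gamma(z)n(z)\big(m(z)-T\big)^{-1}\gamma(\bar z)^*$ coming from \eqref{eq-krein}, produces the stated sign. Third, your justification ``on its orthogonal complement the resolvent stays bounded'' is false in general: if $m(\lambda)$ is an eigenvalue of $T$ embedded in further spectrum, the norm of $\big(m(\lambda+i\eps)-T\big)^{-1}$ restricted to $\ran\big(1-E_T(\{m(\lambda)\})\big)$ grows like $1/\eps$, so the prefactor only yields uniform boundedness, not norm smallness. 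The correct argument, which you half-anticipate in your final paragraph, is dominated convergence for the spectral integral: for real $t$ one has $\big|i\eps/\big(m(\lambda+i\eps)-t\big)\big|\le \eps/\big|\Im m(\lambda+i\eps)\big|\le 2/|m'(\lambda)|$ for small $\eps$ (using $m'(\lambda)\ne 0$ from Lemma \ref{lem1}), while pointwise the integrand tends to $\frac{1}{m'(\lambda)}1_{\{m(\lambda)\}}(t)$; strong convergence follows, and replacing $\gamma(\lambda\pm i\eps)$ by $\gamma(\lambda)$ is then harmless since the middle factor is uniformly bounded in norm. With these repairs your proof is complete.
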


\begin{proof}
Let $(e_j)$ be an orthonormal basis
in $\ker \big( T-m(\lambda)\big)$. It follows from \eqref{eq-ker}
that the family
$\Big( \sqrt{\dfrac{n(\lambda)}{m'(\lambda)}}\, \gamma(\lambda)e_j\Big)$
is a basis in $\ker(H-\lambda)$, and we just need to show that this basis
is orthonormal. To see this, take any $\xi_1,\xi_2\in \ker \big( T-m(\lambda)\big)$,
then, using \eqref{eq-mgam},
\begin{multline*}
\bigg\langle \sqrt{\dfrac{n(\lambda)}{m'(\lambda)}}\,\gamma(\lambda)\xi_1, \sqrt{\dfrac{n(\lambda)}{m'(\lambda)}}\,\gamma(\lambda)\xi_2\bigg\rangle_\cH
=
\dfrac{n(\lambda)}{m'(\lambda)}\,\big\langle\xi_1, \gamma(\lambda)^*\gamma(\lambda)\xi_2\big\rangle_\cG\\
=
\dfrac{n(\lambda)}{m'(\lambda)}\,\big\langle\xi_1, M'(\lambda)\xi_2\big\rangle_\cG
=
\dfrac{n(\lambda)}{m'(\lambda)}\,
\Big\langle
\xi_1, \Big( \dfrac{m'(\lambda)}{n(\lambda)} -\dfrac{n'(\lambda)}{n(\lambda)^2}\cdot
\big(m(\lambda)-T\big)\Big)\xi_2
\Big\rangle_\cG\\
=
\dfrac{n(\lambda)}{m'(\lambda)}\,
\Big\langle
\xi_1,  \dfrac{m'(\lambda)}{n(\lambda)}\, \xi_2\Big\rangle_\cG
=\langle\xi_1,\xi_2\rangle_\cG.
\end{multline*}
which concludes the proof.
\end{proof}
The aim of the present section is to find a certain  analog of
Proposition \ref{prop1} for operators with continuous spectra.
Take an arbitrary segment $[a,b]\subset J$ whose endpoints satisfy
\begin{equation}
       \label{eq-cond}
a,b\notin\spec_\mathrm{p} H  \text{ or, equivalently, }  m(a), m(b) \notin\spec_\mathrm{p} T,
\end{equation} 
and consider the operator
\begin{equation}
  \label{eq-pint}
\Phi\big([a,b]\big):= \int_a^b \sqrt{\dfrac{n(\lambda)}{m'(\lambda)}}\, \gamma(\lambda)\, dE_T\big(m(\lambda)\big) \in \cL(\cG,\cH)
\end{equation}
and its adjoint
\[
\Phi^*\equiv\Phi\big([a,b]\big)^*:= \int_a^b \sqrt{\dfrac{n(\lambda)}{m'(\lambda)}} \, dE_T\big(m(\lambda)\big)\, \gamma(\lambda)^*
\in \cL(\cH,\cG).
\]
These two operators are defined as suitable limits of the associated Riemann-Stieltjes integral sums.
More precisely, consider a partition $\Delta$ of $[a,b]$, $a=\lambda_0<\lambda_1<\dots<\lambda_n=b$,
denote $\Delta_j:=[\lambda_{j-1},\lambda_j)$, pick some $\xi_j\in \overline{ \Delta_j}$
and consider the Riemann-Stieltjes integral sum
\begin{equation}
      \label{eq-phisum}
\Phi_\Delta=\sum_{j=1}^n \sqrt{\dfrac{n(\xi_j)}{m'(\xi_j)}} \,\gamma(\xi_j) E_T\big(m(\Delta_j)\big).
\end{equation}
The strong limit of $\Phi_\Delta$ as $\max_j |\Delta_j|\to 0$, if it exists
and is independent of the choice of the partition points $\lambda_j$, $\xi_j$, 
is then denoted by the above integral expression \eqref{eq-pint}. The general theory
of such integrals is rather involved, see e.g. \cite{AM,BS}, but in our case
the maps $\gamma$ are holomorph, and the study of the above integral reduces to
the study of usual (scalar) Riemann-Stieltjes integral with respect to spectral measures.
The existence in our case can be proved in an easier way using the constructions of \cite[Section 7]{adam}.
Note that the points $m(\lambda)$ with $m'(\lambda)=0$ do not belong to the support of $E_T$ due to Lemma \ref{lem1}
and that the map $\lambda\mapsto \sqrt{\dfrac{n(\lambda)}{m'(\lambda)}}\,\gamma(\lambda)$
is lipschitzian on $[a,b]\cap m^{-1}(S_T)$. This is sufficient to show
that, for $\max_j |\Delta_j|\to 0$, the above integral sums $\Phi_\Delta$ and their adjoints
$\Phi_\Delta^*$ converge in the operator norm to their limit values
$\Phi\equiv\Phi\big([a,b]\big)$ and $\Phi\big([a,b]\big)^*$ respectively, see \cite[Remark 7.3]{adam}. 
The following lemma is the main result of the section and provides the main technical ingredient
of the subsequent discussion.
\begin{lemma}\label{prop3}
Let a segment $[a,b]\subset J$ satisfy \eqref{eq-cond}, then
$E_H\big([a,b]\big)=\Phi\big([a,b]\big)\Phi\big([a,b]\big)^*$.
\end{lemma}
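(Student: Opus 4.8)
The plan is to compute $E_H\big([a,b]\big)$ directly from Stone's formula together with the Krein resolvent formula \eqref{eq-krein}, and then to recognize the resulting boundary-value integral as $\Phi\Phi^*$. First I would write, for $f,g\in\cH$,
\[
\big\langle f, E_H([a,b])g\big\rangle = \slim_{\eps\to0+}\frac{1}{2\pi i}\int_a^b\big\langle f,\big[(H-\lambda-i\eps)^{-1}-(H-\lambda+i\eps)^{-1}\big]g\big\rangle\,d\lambda ;
\]
the hypothesis \eqref{eq-cond} that $a,b\notin\spec_\mathrm{p}H$ guarantees that Stone's formula produces exactly $E_H([a,b])$ with no boundary corrections. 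Substituting \eqref{eq-krein} splits the integrand into a contribution from $(H^0-z)^{-1}$ and one from $\gamma(z)M(z)^{-1}\gamma(\Bar z)^*$. Since $[a,b]\subset J\subset\RR\setminus\spec H^0$ is compact, the resolvent of $H^0$ is holomorphic and uniformly bounded in a complex neighbourhood of $[a,b]$, so the $H^0$-difference tends to $0$ uniformly and contributes nothing. Thus only the Krein term survives, and using $\langle f,\gamma(z)X\rangle=\langle\gamma(z)^* f,X\rangle$ I am left with a difference of two integrals whose integrands are of the form $\big\langle\gamma(\lambda\mp i\eps)^* f,\,M(\lambda\mp i\eps)^{-1}\,\gamma(\lambda\pm i\eps)^* g\big\rangle$.

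Next I would insert the representation $M(z)^{-1}=n(z)\,(m(z)-T)^{-1}$ coming from \eqref{eq-mspec} and expand the $T$-resolvent through the spectral theorem, $(m(z)-T)^{-1}=\int\big(m(z)-t\big)^{-1}\,dE_T(t)$. Because $\gamma$, $m$ and $n$ are holomorphic on $J$, their boundary values are simply $\gamma(\lambda)$, $m(\lambda)$, $n(\lambda)$, while $m(\lambda\pm i\eps)=m(\lambda)\pm i\eps\,m'(\lambda)+O(\eps^2)$; after forming the difference of the two resolvent branches, the scalar kernels combine into a Poisson kernel $\tfrac{\delta}{(t-m(\lambda))^2+\delta^2}$ with effective width $\delta\asymp\eps\,|m'(\lambda)|$. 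I would then apply Fubini to exchange the $\lambda$-integration with $dE_T(t)$ and perform the change of variables $\mu=m(\lambda)$, which is legitimate on $[a,b]\cap K$ because $m'\neq0$ there and $K$ is connected by Lemma \ref{lem1}; outside $K$ the value $m(\lambda)$ lies off $\spec T$, the $T$-resolvent stays bounded, and that contribution drops out. Under this substitution the coupled regularization becomes a genuine approximate identity in the variable $\mu$, the sign condition $n\,m'>0$ of Lemma \ref{lem1} keeping it positive and making $\sqrt{n/m'}$ real; passing to the limit picks out the value of the (Lipschitz) integrand at $\mu=t$ and yields
\[
\big\langle f,E_H([a,b])g\big\rangle=\int_a^b\frac{n(\lambda)}{m'(\lambda)}\,d\big\langle\gamma(\lambda)^* f,\,E_T\big(m(\lambda)\big)\,\gamma(\lambda)^* g\big\rangle,
\]
understood as a Riemann--Stieltjes integral.

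It then remains to identify the right-hand side with $\langle f,\Phi\Phi^* g\rangle=\langle\Phi^* f,\Phi^* g\rangle_\cG$. Working from the Riemann--Stieltjes sums associated with \eqref{eq-phisum}, the cross terms $\langle E_T(m(\Delta_j))\cdots,E_T(m(\Delta_k))\cdots\rangle$ with $j\neq k$ vanish, since $m$ is injective on $K$ and hence $E_T(m(\Delta_j))E_T(m(\Delta_k))=E_T\big(m(\Delta_j)\cap m(\Delta_k)\big)=0$; the diagonal sum collapses to precisely the Stieltjes integral displayed above. Matching the two expressions gives $E_H([a,b])=\Phi\Phi^*$.

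The main obstacle is the middle step: justifying the interchange of the limit $\eps\to0+$ with the $\lambda$-integral. The difficulty is that $M(\lambda\pm i\eps)^{-1}$ blows up like $\eps^{-1}$ on $[a,b]\cap K$, so naively replacing $\gamma(\lambda\pm i\eps)^*$ by $\gamma(\lambda)^*$ produces error terms that are only $O(1)$ pointwise and do not obviously integrate to zero. What saves the argument is exactly that the natural combination $\lambda\mapsto\sqrt{n(\lambda)/m'(\lambda)}\,\gamma(\lambda)$ is lipschitzian on $[a,b]\cap m^{-1}(S_T)$, as already observed before the statement, and that the change of variables $\mu=m(\lambda)$ turns the singular, position-dependent regularization into a standard approximate identity against Lebesgue measure, whose limit is controlled by the modulus of continuity of the integrand. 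Once this convergence is secured, all remaining steps are routine.
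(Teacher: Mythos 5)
Your overall architecture --- Stone's formula plus the Krein resolvent formula \eqref{eq-krein}, discarding the holomorphic $H^0$ term, then matching the limit against the Riemann--Stieltjes sums for $\Phi\Phi^*$ using orthogonality of the projections $E_T\big(m(\Delta_j)\big)$ --- is the same as the paper's, and those outer steps are sound. But the step you yourself flag as ``the main obstacle'' is the heart of the proof, and the mechanism you offer does not close it. After inserting $M(z)^{-1}=n(z)\big(m(z)-T\big)^{-1}$, the object to control is
\begin{equation*}
\int_a^b\Big\langle \gamma(\lambda\mp i\eps)^*f,\;\big(m(\lambda)\pm i\eps\, m'(\lambda)-T\big)^{-1}\,\gamma(\lambda\pm i\eps)^*g\Big\rangle_\cG\,d\lambda ,
\end{equation*}
and your proposed ``Fubini to exchange the $\lambda$-integration with $dE_T(t)$'' is not available as stated: the complex measure $\big\langle\gamma(\lambda)^*f,\,E_T(\cdot)\,\gamma(\lambda)^*g\big\rangle$ itself depends on the integration variable $\lambda$, so there is no fixed measure against which your approximate identity acts, and dominated convergence has nothing to bite on. Likewise the $O(1)$ error terms you correctly identify (from replacing $\gamma(\lambda\pm i\eps)^*$ by $\gamma(\lambda)^*$ against the $\eps^{-1}$ blow-up of $M(\lambda\pm i\eps)^{-1}$) are not killed by the Lipschitz property of $\lambda\mapsto\sqrt{n(\lambda)/m'(\lambda)}\,\gamma(\lambda)$; what is needed is the a.e.\ existence of finite boundary values of the Herglotz functions $z\mapsto\langle u,M(z)^{-1}v\rangle$, which your sketch never invokes and which only applies directly to \emph{fixed} vectors $u,v$.

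The paper resolves exactly these two points by a device absent from your proposal: it first checks that both sides vanish on $\cD^\perp$, where $\cD$ is the span of the vectors $\gamma(z)\xi$ with $z\notin\RR$, and then tests against $f=\gamma(z_1)\xi_1$, $g=\gamma(z_2)\xi_2$. For such vectors the identity \eqref{eq-mgam} converts every product $\gamma(\cdot)^*\gamma(\cdot)$ into difference quotients of $M$, and since $M(z)$ is a function of $T$ it commutes with $E_T$; the whole expression then collapses to a purely \emph{scalar} kernel $k(\lambda,\eps)$ integrated against the fixed spectral measure $\mu$ of $T$ attached to $M(z_1)\xi_1$ and $M(z_2)\xi_2$. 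A uniform bound $\sup_{\lambda,\eps}|k(\lambda,\eps)|<\infty$ (obtained by precisely the Poisson-mass computation you have in mind) legitimizes dominated convergence, and the error terms vanish via the Herglotz boundary-value argument. Two further points are handled too loosely in your sketch: the claim that contributions outside $K$ ``drop out'' ignores that $\mathrm{dist}\big(m(\lambda),\spec T\big)$ degenerates as $\lambda$ approaches $\partial K$ --- the paper instead proves the lemma first under the extra hypothesis $m'\ne0$ on all of $[a,b]$ and then reduces the general case to a subinterval $[c,d]$ containing all points of $[a,b]$ with $m(\lambda)\in S_T$, on which $m'$ does not vanish, the excised pieces carrying neither $E_H$- nor $E_T\big(m(\cdot)\big)$-mass; and in the final identification with $\Phi\Phi^*$ the interior partition points $\lambda_j$ may well satisfy $m(\lambda_j)\in\spec_\mathrm{p}T$, so the collapse of the sums requires the quantitative estimates on $\big(m(\lambda_j)-T\big)E_T\big(m(\Delta_j)\big)$ that the paper carries out, not injectivity of $m$ alone. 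As it stands, your proposal names the right difficulty but does not supply the idea that overcomes it.
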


\begin{proof}
Throughout the proof we denote for brevity
\[
P:=E_H\big([a,b]\big), \quad
\Phi:=\Phi\big([a,b]\big)
\quad\text{and}\quad \Pi:=\Phi \Phi^*.
\]
So we need to prove the equality $P=\Pi$.
By the Stone formula for the spectral projections, cf. \cite[Theorem 42]{Jak},
we have 
\[
P=\slim_{\eps\to +0}\dfrac{1}{\pi}
\int_a^b \Im (H-x-i\eps)^{-1}dx,
\]

Let us assume first that
\begin{equation}
        \label{eq-mp0}
m'(x)\ne 0 \text { for all } x \in[a,b],
\end{equation}

As the map $z\mapsto (H^0-z)^{-1}$ is holomorph in a complex neighborhood of $[a,b]$,
one has
\[
\slim_{\eps\to +0}\dfrac{1}{\pi}
\int_a^b \Im (H^0-x-i\eps)^{-1}dx=0.
\]
Using  the resolvent formula \eqref{eq-krein} we arrive at
\begin{equation}
          \label{eq-pgg}
P=-\slim_{\varepsilon\to +0}\dfrac{1}{\pi}
\int_a^b \Im\Big[ \gamma(x+i\varepsilon)M(x+i\varepsilon)^{-1}\gamma(x-i\varepsilon)^*\Big]dx.
\end{equation}
Let $\cD$ be the algebraic linear hull of the set $\big\{\gamma(z)\xi: z\in\CC\setminus\RR, \, \xi\in\cG\big\}$
and let $f\in \cD^\perp$. First, for all $z\notin\RR$ there holds $\gamma(z)^*f=0$, and Eq. \eqref{eq-pgg}
gives $Pf=0$.  On the other hand, the map $z\mapsto \gamma(z)^*f$ extends by continuity to $z\in [a,b]$,
which shows that $\gamma(\lambda)^* f=0$ for all $\lambda\in [a,b]$, which, in its turn, gives the equalities
$\Phi^*f=0$ and $\Pi f=0$.
Therefore, the operators $P$ and $\Pi$ coincide on $\cD^\perp$.

Now let us show that $P$ coincides with $\Pi$ on the closure of $\cD$.
As both $P$ and $\Pi$ are bounded operators,
it is now sufficient to prove that $\langle f,P g\rangle_\cH=\langle f, \Pi g\rangle_\cH$
for the vectors $f$ and $g$ having the form $f=\gamma(z_1)\xi_1$ and $g=\gamma(z_2){\xi_2}$
with arbitrary $z_1,z_2\in\CC\setminus\RR$ and $\xi_1,{\xi_2}\in\cG$. 
For such $f$ and $g$ we have
\begin{multline*}
\big\langle f,Pg\big\rangle_\cH
= -\lim_{\varepsilon\to +0}\dfrac{1}{\pi}
\int_a^b \Big \langle\gamma(z_1){\xi_1},\Im\Big[ \gamma(x+i\varepsilon)M(x+i\varepsilon)^{-1}\gamma(x-i\varepsilon)^*\Big]\gamma(z_2){\xi_2}\Big\rangle_\cH
dx\\
=
-\dfrac{1}{\pi}
\lim_{\varepsilon\to +0}
\int_a^b \Big \langle \xi_1,\gamma(z_1)^*\Im\Big[ \gamma(x+i\varepsilon)M(x+i\varepsilon)^{-1}\gamma(x-i\varepsilon)^*\Big]\gamma(z_2){\xi_2}\Big\rangle_\cG
dx.
\end{multline*}
Using \eqref{eq-mconj} and \eqref{eq-mgam} we have
\begin{multline*}
2i\gamma(z_1)^*\Im\Big[ \gamma(x+i\varepsilon)M(x+i\varepsilon)^{-1}\gamma(x-i\varepsilon)^*\Big]\gamma(z_2)\\
\begin{aligned}
=&\gamma(z_1)^*\gamma(x+i\varepsilon)M(x+i\varepsilon)^{-1}\gamma(x-i\varepsilon)^*\gamma(z_2)\\
&-\gamma(z_1)^*\gamma(x-i\varepsilon)M(x-i\varepsilon)^{-1}\gamma(x+i\varepsilon)^*\gamma(z_2)\\
=&\dfrac{M(x+i\varepsilon)-M(\overline z_1)}{x+i\varepsilon-\overline z_1}\,M(x+i\varepsilon)^{-1}\dfrac{M(z_2)-M(x+i\varepsilon)}{z_2-x-i\varepsilon}\\
&-\dfrac{M(x-i\varepsilon)-M(\overline z_1)}{x-i\varepsilon-\overline z_1}\,M(x-i\varepsilon)^{-1}\dfrac{M(z_2)-M(x-i\varepsilon)}{z_2-x+i\varepsilon}\\
=&\dfrac{M(z_2)+M(\overline z_1) - M(x+i\varepsilon) -M(\overline z_1)M(z_2)M(x+i\varepsilon)^{-1}}{(x+i\varepsilon-\overline z_1)(z_2-x-i\varepsilon)}\\
&-\dfrac{M(z_2)+M(\overline z_1) - M(x-i\varepsilon) -M(\overline z_1)M(z_2)M(x-i\varepsilon)^{-1}}{(x-i\varepsilon-\overline z_1)(z_2-x+i\varepsilon)}.
\end{aligned}
\end{multline*}
As the map $z\mapsto M(z)$ is holomorph in a certain complex neighborhood of $[a,b]$
and $z_1$ and $z_2$ are fixed non-real numbers, we have the obvious relations
\begin{align*}
\lim_{\varepsilon\to +0} \int_a^b
\dfrac{M(z_2)+M(\overline z_1) - M(x+i\varepsilon)}{(x+i\varepsilon-\overline z_1)(z_2-x-i\varepsilon)}\,dx
&=
\lim_{\varepsilon\to +0} \int_a^b
\dfrac{M(z_2)+M(\overline z_1) - M(x-i\varepsilon)}{(x-i\varepsilon-\overline z_1)(z_2-x+i\varepsilon)}\,dx\\
&=\int_a^b 
\dfrac{M(z_2)+M(\overline z_1) - M(x)}{(x-\overline z_1)(z_2-x)}\,dx.
\end{align*}
Hence,
\begin{multline*}
\big\langle f,P g\big\rangle_\cH
=\dfrac{1}{2\pi i}\lim_{\varepsilon\to +0}
\int_a^b\Big\langle \xi_1, \Big[
\dfrac{M(\overline z_1)M(z_2)M(x-i\varepsilon)^{-1}}{(\overline z_1 -x+i\varepsilon)(z_2-x+i\varepsilon)}\\
-
\dfrac{M(\overline z_1)M(z_2)M(x+i\varepsilon)^{-1}}{(\overline z_1 -x-i\varepsilon)(z_2-x-i\varepsilon)}\Big]{\xi_2}\Big\rangle_\cG dx.
\end{multline*}
Furthermore, we have the representation
\[
\dfrac{1}{(\overline z_1 -x-i\varepsilon)(z_2-x-i\varepsilon)}=\dfrac{1+\varepsilon h(x,\varepsilon)}{(\overline z_1 -x)(z_2-x)},
\]
where $h$ is a continuous function with $h(x,0)=1$ and
such that for some $\varepsilon_0>0$ one has the bound
$h_0:=\sup_{x\in[a,b],\, |\varepsilon|<\varepsilon_0} \big|h(x,\varepsilon)\big|<\infty$.
We have then
\begin{equation}
\begin{aligned}
\big\langle f,P g \big\rangle_\cG
=&\lim_{\varepsilon\to +0}
\int_a^b\Big\langle\xi_1, 
\dfrac{M(\overline z_1)M(z_2)}{(\overline z_1 -x)(z_2-x)}\cdot \dfrac{M(x-i\varepsilon)^{-1}-M(x+i\varepsilon)^{-1}}{2\pi i}{\xi_2}\Big\rangle_\cG \,dx\\
&-\lim_{\varepsilon\to +0}
\int_a^b  \dfrac{\varepsilon h(x,-\varepsilon)\Big\langle M(z_1){\xi_1},M(x-i\varepsilon)^{-1}M(z_2){\xi_2}\Big\rangle_\cG}{2\pi i(\overline z_1 -x)(z_2-x)} \, dx\\
&-\lim_{\varepsilon\to +0}
\int_a^b  \dfrac{\varepsilon h(x,\varepsilon)\Big\langle M(z_1){\xi_1},M(x+i\varepsilon)^{-1}M(z_2){\xi_2}\Big\rangle_\cG}{2\pi i(\overline z_1 -x)(z_2-x)}  \,dx.
\end{aligned}
              \label{eq-3t}
\end{equation}
By elementary considerations, see \cite[Lemma 3.14]{BGP08}, for small but non-zero $\eps$
we have the representation
\[
M(x+i\varepsilon)^{-1}=n(x+i\varepsilon)L(x,\varepsilon) \big(m(x)+i\varepsilon m'(x)-T\big)^{-1},
\]
where $L(x,\varepsilon)\in \cL(\cG)$ such that
$\lim_{\varepsilon\to 0}\sup_{x\in[a,b]} \big\|L(x,\varepsilon)-1\big\|_{\cL(\cG)}=0$.
By the assumption \eqref{eq-mp0} made at the beginning, we have the bound
\[
\kappa:=\inf_{x\in[a,b]} \big|m'(x)\big|>0.
\]
and for sufficiently small $\varepsilon\ne 0$ and all $x\in[a,b]$ one has the uniform estimate, with some fixed $C>0$,
\[
\Big|
\dfrac{\varepsilon h(x,\pm\varepsilon)}{(\overline z_1 -x)(z_2-x)} \Big\langle M(z_1){\xi_1},M(x\pm  i\varepsilon)^{-1}M(z_2){\xi_2}\Big\rangle_\cG
\Big|\le C.
\]
On the other hand, as noted at the beginning of the section, for almost all $x\in\RR$
there exist finite limits
\[
\lim_{\varepsilon\to 0+}\Big\langle M(z_1){\xi_1},M(x\pm i\varepsilon)^{-1}M(z_2){\xi_2}\Big\rangle_\cG=:C_\pm(x).
\]
Therefore, by using the dominated convergence we arrive at
\begin{multline*}
\lim_{\varepsilon\to +0}
\int_a^b  \dfrac{\varepsilon h(x,\pm\varepsilon)}{(\overline z_1 -x)(z_2-x)} \Big\langle M(z_1){\xi_1},M(x\pm i\varepsilon)^{-1}M(z_2){\xi_2}\Big\rangle_\cG dx
\\
\begin{aligned}
=&
\int_a^b  \lim_{\varepsilon\to +0}
\dfrac{\varepsilon h(x,\pm\varepsilon)}{(\overline z_1 -x)(z_2-x)} \Big\langle M(z_1){\xi_1},M(x\pm i\varepsilon)^{-1}M(z_2){\xi_2}\Big\rangle_\cG dx\\
=&\int_a^b  \lim_{\varepsilon\to +0}
\dfrac{ \varepsilon h(x,\pm\varepsilon)}{(\overline z_1 -x)(z_2-x)}
\lim_{\varepsilon\to +0}  \Big\langle M(z_1){\xi_1},M(x\pm i\varepsilon)^{-1}M(z_2){\xi_2}\Big\rangle_\cG dx\\
=&\int_a^b  \lim_{\varepsilon\to +0}
\dfrac{ \varepsilon h(x,\pm\varepsilon)}{(\overline z_1 -x)(z_2-x)} \,
C_\pm(x) dx=0.
\end{aligned}
\end{multline*}
Substituting these equalities into \eqref{eq-3t} we arrive at
\begin{equation}
    \label{eq-loc5}
\big\langle f,P g\big\rangle_\cH
=\lim_{\varepsilon\to +0}
\int_a^b\Big\langle \xi_1, 
\dfrac{M(\overline z_1)M(z_2)}{(\overline z_1 -x)(z_2-x)}\cdot \dfrac{M(x-i\varepsilon)^{-1}-M(x+i\varepsilon)^{-1}}{2\pi i}\,{\xi_2}\Big\rangle_\cG dx
\end{equation}
Denote by $\mu$ the spectral measure for the operator $T$ associated with the vectors
$M(z_1){\xi_1}$ and $M(z_2){\xi_2}$. With the help of $\mu$ one can rewrite the equality \eqref{eq-loc5} as follows:
\begin{multline}
\int_a^b\Big\langle \xi_1, 
\dfrac{M(\overline z_1)M(z_2)}{(\overline z_1 -x)(z_2-x)}\cdot \dfrac{M(x-i\varepsilon)^{-1}-M(x+i\varepsilon)^{-1}}{2\pi i}\,{\xi_2}\Big\rangle_\cG dx\\
\begin{aligned}
=&\int_a^b \dfrac{1}{(\overline z_1 -x)(z_2-x)}\Big\langle
M(z_1){\xi_1}, 
\dfrac{M(x-i\varepsilon)^{-1}-M(x+i\varepsilon)^{-1}}{2\pi i}
M(z_2){\xi_2}\Big\rangle_\cG\,dx\\
=&\int_a^b \int_{S_T}
\dfrac{1}{2\pi i(\overline z_1 -x)(z_2-x)} \Big(
\dfrac{n(x+i\varepsilon)}{\lambda-m(x+i\varepsilon)}-\dfrac{n(x+i\varepsilon)}{\lambda-m(x+i\varepsilon)}
\Big) d\mu(\lambda) dx\\
=&\int_{S_T} k(\lambda,\varepsilon) d\mu(\lambda),
\end{aligned}
   \label{eq-intk}
\end{multline}
where we denoted
\[
k(\lambda,\varepsilon) =\int_a^b\dfrac{1}{2\pi i(\overline z_1 -x)(z_2-x)}\cdot \Big(
\dfrac{n(x+i\varepsilon)}{\lambda-m(x+i\varepsilon)}-\dfrac{n(x-i\varepsilon)}{\lambda-m(x-i\varepsilon)}
\Big) dx.
\]
Our aim now is to pass to the limit $\varepsilon\to +0$ in the integral on the right-hand side of \eqref{eq-intk}.
By \cite[Lemma 10]{KP11} one has
\begin{equation*}
\dfrac{1}{\lambda-m(x+i\varepsilon)}=\dfrac{1}{\lambda -m(x) -i\varepsilon m'(x)}
\cdot\Big(
1 + \varepsilon\, g(x,\lambda,\varepsilon)
\Big),
\end{equation*}
with
\[
\sup_{\substack{x\in [a,b],\, \lambda\in \RR\\ 0<|\varepsilon|<\varepsilon_0}} \big|g(x,\lambda,\varepsilon)\big|<+\infty.
\]
As $n$ is a holomorph function, one has the representation $n(x+i\varepsilon)=n(x)+\varepsilon p(x,\varepsilon)$
with
\[
\sup_{\substack{x\in [a,b]\\ |\varepsilon|<\varepsilon_0}} \big|p(x,\varepsilon)\big|<+\infty.
\]
Hence one can represent the above function $k$ as
\begin{equation}
         \label{eq-kkk}
\begin{aligned}
k(\lambda,\varepsilon)&=\int_{a}^{b}
\dfrac{n(x)}{2\pi i(\overline z_1 -x)(z_2-x)}\cdot\Big(
\dfrac{1}{\lambda-m(x)-i\varepsilon m'(x)}
-
\dfrac{1}{\lambda-m(x)+i\varepsilon m'(x)}
\Big)dx
\\
&\quad+
\int_{a}^{b} 
\dfrac{1}{2\pi i(\overline z_1 -x)(z_2-x)}
\dfrac{\varepsilon r(x,\lambda, \varepsilon)}{\lambda-m(x)-i\varepsilon m'(x)}\,dx\\
&\quad +
\int_{a}^{b} 
\dfrac{1}{2\pi i(\overline z_1 -x)(z_2-x)}
\dfrac{\varepsilon r(x,\lambda, -\varepsilon)}{\lambda-m(x)+i\varepsilon m'(x)}\,dx
\end{aligned}
\end{equation}
with $r(x,\lambda,\varepsilon):= p(x,\varepsilon)\big(1+\varepsilon g(x,\lambda,\varepsilon)\big) + n(x) g(x,\lambda,\varepsilon)$,
and one has the obvious bound
\[
\sup_{\substack{x\in [a,b],\, \lambda\in \RR\\ 0<|\varepsilon|<\varepsilon_0}} \big|r(x,\lambda,\varepsilon)\big|=:R<+\infty.
\]
Denote the three summands on the right-hand side of \eqref{eq-kkk} by $I_1(\lambda,\varepsilon)$,
$I_2(\lambda,\varepsilon)$ and $I_3(\lambda,\varepsilon)$, respectively.
For all $\lambda\in\RR$ and $0<|\varepsilon|<\varepsilon_0$ we can estimate
\[
\Big|
\dfrac{\varepsilon r(x,\lambda, \varepsilon)}{\lambda-m(x)+i\varepsilon m'(x)}
\Big|\le \dfrac{R}{\kappa},
\]
implying the bound
\[
\big|
I_{2/3}(\lambda,\varepsilon)
\big|\le  \dfrac{R|b-a|}{2\pi \kappa\cdot |\Im z_1 \Im z_2|}
\text{ for all $\lambda\in\RR$ and $0<|\varepsilon|<\varepsilon_0$.}
\]
To study $I_1$, we rewrite the above expression in the form
\[
I_1(\lambda,\varepsilon)=
\dfrac{1}{\pi}\,\int_{a}^{b}
\dfrac{1}{(\overline z_1 -x)(z_2-x)}\cdot
\dfrac{\varepsilon m'(x) n(x)}{\big(\lambda-m(x)\big)^2 + \big(\varepsilon m'(x)\big)^2}\, dx.
\]
Denoting $N:=\sup_{x\in [a,b]} \big|n(x)\big|$ one obtains
\begin{multline*}
|I_1|\le \dfrac{N}{\pi|\Im z_1 \Im z_2|} \int_{a}^{b} \dfrac{\big|m'(x)\big|}{\big(\lambda-m(x)\big)^2 + \varepsilon^2 \kappa^2}\,dx
=\dfrac{N}{\pi|\Im z_1 \Im z_2|} \,\left|\,
\int_{m(a)}^{m(b)} \dfrac{\varepsilon}{(\lambda -y)^2+\varepsilon^2 \kappa^2}\,dy \right|\\
\le
\dfrac{N}{\pi|\Im z_1 \Im z_2|} \int_{-\infty}^{+\infty} \dfrac{\varepsilon}{y^2+\varepsilon^2 \kappa^2}\,dy
 =\dfrac{N}{\kappa|\Im z_1 \Im z_2|}.
\end{multline*}
Therefore, we have shown the estimate $\sup_{\lambda\in S_T,\, \varepsilon\in(0,\varepsilon_0)}\big| k(\lambda,\varepsilon)\big|<\infty$, and
the dominated convergence gives
\begin{equation} 
         \label{eq-proj}
\langle f, Pg\rangle_\cG=\lim_{\varepsilon\to +0} \int_{S_T} k(\lambda,\varepsilon) d\mu(\lambda)=
\int_{S_T} \lim_{\varepsilon\to +0} k(\lambda,\varepsilon) d\mu(\lambda).
\end{equation}
We are now going to calculate the limit $\lim_{\varepsilon\to +0} k(\lambda,\varepsilon)$.
Consider first the limits of the terms $I_2$ and $I_3$. As noted above, the subintegral
functions are uniformly bounded for small $\varepsilon$, and by applying the dominated convergence
we obtain
\[
\lim_{\varepsilon\to 0+} I_2(\lambda,\varepsilon)=
\int_{a}^{b} \dfrac{1}{(\overline z_1 -x)(z_2-x)}\,\lim_{\varepsilon\to 0+} \dfrac{\varepsilon r(x,\lambda, \varepsilon)}{\lambda-m(x)+i\varepsilon m'(x)}\, dx.
\]
For all but at most one $x\in[a,b]$ we have $\lambda\ne m(x)$ and
\[
\lim_{\varepsilon\to 0+} \dfrac{\varepsilon r(x,\lambda, \varepsilon)}{\lambda-m(x)+i\varepsilon m'(x)}=0,
\]
which gives $\lim\limits_{\varepsilon\to 0+} I_2(\lambda,\varepsilon)=0$. Similarly, 
$\lim\limits_{\varepsilon\to 0+} I_3(\lambda,\varepsilon)=0$

To calculate the limit of $I_1$, without loss of generality we assume that that $m'(x)>0$ for all $x\in [a,b]$;
otherwise one can change simultaneously the signs of $T$, $m$ and $n$.
Introduce a new variable $y=m(x)$. By the implicit function theorem
we have $x=\varphi(y)$ and $\varphi'(y)=1/m'(x)$, and
the expression for $I_1$ can be rewritten in the form
\[
I_1(\lambda,\varepsilon)=
\dfrac{1}{\pi}\,\int_{m(a)}^{m(b)} \dfrac{1}{\big(\overline z_1-\varphi(y)\big)\big( z_2-\varphi(y)\big)}
\dfrac{\varepsilon n\big(\varphi(y)\big)}{\big(\lambda-y\big)^2 + \dfrac{\varepsilon^2}{\varphi'(y)^2}}\,dy.
\]
Introducing another new variable $t$ by $y=\varepsilon t+\lambda$ we arrive at
\begin{equation*}
I_1(\lambda,\varepsilon)=
\dfrac{1}{\pi}\,\int_{\frac{m(a)-\lambda}{\varepsilon}}^{\frac{m(b)-\lambda}{\varepsilon}}
\cdot
\dfrac{1}{\big(\overline z_1-\varphi(\varepsilon t+\lambda)\big)\big( z_2-\varphi(\varepsilon t+\lambda)\big)}
\dfrac{n\big(\varphi(\varepsilon t+\lambda)\big)}{t^2 + \dfrac{1\mathstrut}{\varphi'(\varepsilon t+\lambda)^2}}\,dt.
\end{equation*}
One has the bounds
\[
\sup_{\frac{m(a)-\lambda}{\varepsilon}\le t\le \frac{m(b)-\lambda}{\varepsilon}}
\Big|n\big(\varphi(\varepsilon t+\lambda)\big)\Big|=\sup_{a\le x\le b}\big|n(x)\big|= N<+\infty
\]
and 
\[
\inf_{\frac{m(a)-\lambda}{\varepsilon}\le t\le \frac{m(b)-\lambda}{\varepsilon}} \dfrac{1}{\varphi'(\varepsilon t+\lambda)^2}=
\inf_{a\le x\le b} m'(x)^2=\kappa^2>0.
\]
This leads to the estimate
\[
\left| \dfrac{1}{\big(\overline z_1-\varphi(\varepsilon t+\lambda)\big)\big( z_2-\varphi(\varepsilon t+\lambda)\big)}\dfrac{n\big(\varphi(\varepsilon t+\lambda)\big)}{t^2 + \dfrac{1\mathstrut}{\varphi'(\varepsilon t+\lambda)^2}}\right|
\le \dfrac{N}{|\Im z_1 \Im z_2|(t^2+\kappa^2)}.
\]
Using the fact that the function $t\mapsto (t^2+\kappa^2)^{-1}$ belongs to $L^1(\RR)$
and applying the dominated convergence one obtains the equality
\begin{multline*}
\lim_{\varepsilon\to 0+} I_1(\lambda,\varepsilon)
=\dfrac{1}{\pi}\,\int\limits_{\lim\limits_{\varepsilon\to 0+}\frac{m(a)-\lambda}{\varepsilon}}^{\lim\limits_{\varepsilon\to 0+}\frac{m(b)-\lambda}{\varepsilon}}
\lim_{\varepsilon\to 0+}\dfrac{1}{\big(\overline z_1-\varphi(\varepsilon t+\lambda)\big)\big( z_2-\varphi(\varepsilon t+\lambda)\big)}
\cdot
\dfrac{n\big(\varphi(\varepsilon t+\lambda)\big)}{t^2 + \dfrac{1\mathstrut}{\varphi'(\varepsilon t+\lambda)^2}}\,dt.
\end{multline*}
For any $a\ne 0$ there holds
\[
\int_{-\infty}^{0}\dfrac{dt}{a^2+t^2}=\int_0^{+\infty}\dfrac{dt}{a^2+t^2}=\dfrac{1}{2}\int_{-\infty}^{+\infty}\dfrac{dt}{a^2+t^2}=\dfrac{\pi}{2|a|},
\]
and for any $c\in [a,b]$ one has
\[
\lim_{\varepsilon\to 0+}\frac{m(c)-\lambda}{\varepsilon}=\begin{cases}
+\infty, & \text{if }\lambda<m(c)\\
0 & \text{if }\lambda=m(c)\\
-\infty, & \text{if }\lambda>m(c)
\end{cases}.
\]
Finally, note that for $\lambda \in \big[m(a),m(b)\big]$ there holds
\begin{multline*}
\lim_{\varepsilon\to 0+}\dfrac{1}{\big(\overline z_1-\varphi(\varepsilon t+\lambda)\big)\big( z_2-\varphi(\varepsilon t+\lambda)\big)}\dfrac{n\big(\varphi(\varepsilon t+\lambda)\big)}{z^2 + \dfrac{1\mathstrut}{\varphi'(\varepsilon t+\lambda)^2}}\\
=\dfrac{1}{\big(\overline z_1-\varphi(\lambda)\big)\big( z_2-\varphi(\lambda)\big)}\,
\dfrac{n\big(\varphi(\lambda)\big)}{t^2+\dfrac{1\mathstrut}{\varphi'(\lambda)^2}}\,.
\end{multline*}
Putting all together we arrive at the equalities
\[
\lim_{\varepsilon\to 0+}k(\lambda,\varepsilon)=\begin{cases}
0, & \text{if }\lambda\notin m\big([a,b]\big),\\
\dfrac{\varphi'(\lambda)n\big(\varphi(\lambda)\big)}{2\big(\overline z_1-\varphi(\lambda)\big)\big( z_2-\varphi(\lambda)\big)},& \text{if } \lambda\in \big\{m(a),m(b)\big\},\\
\dfrac{\varphi'(\lambda)n\big(\varphi(\lambda)\big)}{\big(\overline z_1-\varphi(\lambda)\big)\big( z_2-\varphi(\lambda)\big)},& \text{if }\lambda\in m\big((a,b)\big).
\end{cases}
\]
Substituting these equalities into \eqref{eq-proj} and noting that, by assumption \eqref{eq-cond},
we have $E_T(\{m(a)\})=E_T(\{m(b)\})=0$ and $\mu(\{m(a)\})=\mu(\{m(b)\})=0$,
we arrive at
\begin{align*}
\langle f,Pg\rangle_\cH&=\int_{m([a,b])\cap S_T} 
\dfrac{\varphi'(\lambda)n\big(\varphi(\lambda)\big)}{\big(\overline z_1-\varphi(\lambda)\big)\big( z_2-\varphi(\lambda)\big)}\, d\mu (\lambda)\\
&=\int_{[a,b]\cap m^{-1}(S_T)}
\dfrac{1}{(\overline z_1-\lambda)( z_2-\lambda)} \, \dfrac{n(\lambda)}{m'(\lambda)}\, d\mu(m(\lambda)\big).
\end{align*}
Finally, using the inclusion $\supp \mu\subset S_T$, we can rewrite it as
\begin{equation}
        \label{eq-fpg1}
\langle f,Pg\rangle_\cH=
\Big\langle M(z_1)\xi_1, \int_a^b
\dfrac{1}{(\overline z_1-\lambda)( z_2-\lambda)} \, \dfrac{n(\lambda)}{m'(\lambda)}\, dE_T\big(m(\lambda)\big)
M(z_2){\xi_2}\Big\rangle_\cG.
\end{equation}

Now let us switch to the operator $\Pi$. Take a partition $\Delta$ of $[a,b]$, $a=\lambda_0<\lambda_1<\dots<\lambda_n=b$,
denote $\Delta_j:=[\lambda_{j-1},\lambda_j)$ and $\delta:=\max |\Delta_j|$, and consider the integral sums $\Phi_\Delta$
defined by \eqref{eq-phisum} with $\xi_j:=\lambda_j$. As $\Phi_\Delta$ converge in the norm sense to $\Phi$ as $\delta$ tends to $0$,
the products $\Phi_\Delta \Phi_\Delta^*$ converge to $\Pi$. On the other hand, using the orthogonality property:
$E\big(m(\Delta_j)\big)E\big(m(\Delta_k)\big)=0$ for $j\ne k$, we have the representation
\[
\Phi_\Delta \Phi_\Delta^*=\sum_{j=1}^n \dfrac{n(\lambda_j)}{m'(\lambda_j)}
\gamma(\lambda_j)E_T\big(m(\Delta_j)\big)\gamma(\lambda_j)^*.
\]
Therefore, with the help of the identity \eqref{eq-mgam} we arrive at
\begin{multline}
\begin{aligned}
\langle f, \Pi g\rangle_\cH&=\lim_{\delta\to 0}\sum_{j=1}^n
\dfrac{n(\lambda_j)}{m'(\lambda_j)}\Big\langle f, \gamma(\lambda_j)E_T\big(m(\Delta_j)\big)\gamma(\lambda_j)^*g\Big\rangle_\cG\\
&=
\lim_{\delta\to 0}\sum_{j=1}^n
\dfrac{n(\lambda_j)}{m'(\lambda_j)}\Big\langle \gamma(\lambda_j)^*\gamma(z_1)\xi_1, E_T\big(m(\Delta_j)\big)\gamma(\lambda_j)^*\gamma(z_2) {\xi_2}\Big\rangle_\cG\\
&=\lim_{\delta\to 0}\sum_{j=1}^n
\dfrac{n(\lambda_j)}{m'(\lambda_j)(\overline z_1 -\lambda_j)(z_2-\lambda_j)}
\end{aligned}\\
\cdot
\Big\langle
\big( M(z_1)-M(\lambda_j)\big)\xi_1,
E_T\big(m(\Delta_j)\big)
\big( M(z_2)-M(\lambda_j)\big){\xi_2}
\Big\rangle_\cG.
          \label{eq-lims}
\end{multline}
Denote
\begin{equation}
  \label{eq-mn}
  \begin{aligned}
n_0&:=\min_{x\in [a,b]}\big|n(x)\big|,&n_1&:=\max_{x\in [a,b]}\big|n(x)\big|, \\
m_0&:=\min_{x\in [a,b]}\big|m'(x)\big|, &m_1&:=\max_{x\in [a,b]}\big|m'(x)\big|,
& A&:=\dfrac{n_1 m_1}{m_0 n_0}.
\end{aligned}
\end{equation}
One has obviously $n_0>0$, $m_0>0$, and, for any $h\in\cG$,
\[
\big\|
E_T\big(m(\Delta_j)\big)M(\lambda_j) h
\big\|_\cG=
\Big\|
\dfrac{m(\lambda_j)-T}{n(\lambda_j)}\,E_T\big(m(\Delta_j)\big)h\Big\|_\cG
\le \dfrac{m_1 |\Delta_j|\cdot \big\|E_T\big(m(\Delta_j)\big)h\big\|_\cG}{n_0}.
\]
Using this estimate we obtain
\begin{multline*}
\bigg|
\lim_{\delta\to 0}\sum_{j=1}^n
\dfrac{n(\lambda_j)}{m'(\lambda_j)(\overline z_1 -\lambda_j)(z_2-\lambda_j)}
\cdot
\Big\langle
M(z_1)\,\xi_1,
E_T\big(m(\Delta_j)\big)
M(\lambda_j)\,\xi_2
\Big\rangle_\cG\bigg|\\
\begin{aligned}
\le& A
\lim_{\delta\to 0}\sum_{j=1}^n |\Delta_j |\cdot 
\Big\|E_T\big(m(\Delta_j)\big)M(z_1)\,\xi_1\Big\|_\cG
\cdot
\big\|E_T\big(m(\Delta_j)\big)\xi_2\big\|_\cG\\
\le&
A
\lim_{\delta\to 0}
\bigg(
\delta \sqrt{\sum_{j=1}^n \Big\|E_T\big(m(\Delta_j)\big) M(z_1)\,\xi_1\Big\|^2_\cG}
\cdot
\sqrt{\sum_{j=1}^n \|E_T\big(m(\Delta_j)\big)\xi_2\big\|^2_\cG}
\bigg)\\
=&
A
\lim_{\delta\to 0}
\bigg(
\delta \Big\|E_T\Big(m\big([a,b]\big)\Big)M(z_1)\,\xi_1\Big\|_\cG \cdot \Big\|E_T\Big(m\big([a,b]\big)\Big)\xi_2\Big\|_\cG\bigg)\\
=&0.
\end{aligned}
\end{multline*}
In a similar way,
\begin{multline*}
\lim_{\delta\to 0}\sum_{j=1}^n
\dfrac{n(\lambda_j)}{m'(\lambda_j)(\overline z_1 -\lambda_j)(z_2-\lambda_j)}
\cdot
\Big\langle
M(\lambda_j)\,\xi_1,
E_T\big(m(\Delta_j)\big)
M(z_2)\,\xi_2
\Big\rangle_\cG\\
=\lim_{\delta\to 0}\sum_{j=1}^n
\dfrac{n(\lambda_j)}{m'(\lambda_j)(\overline z_1 -\lambda_j)(z_2-\lambda_j)}
\cdot
\Big\langle
M(\lambda_j)\,\xi_1,
E_T\big(m(\Delta_j)\big)
M(\lambda_j)
\,\xi_2
\Big\rangle_\cG=0.
\end{multline*}
Injecting these estimates into \eqref{eq-lims} one arrives at
\begin{align*}
\langle f, \Pi g\rangle_\cH&=
\lim_{\delta\to 0}\sum_{j=1}^n
\dfrac{n(\lambda_j)}{m'(\lambda_j)(\overline z_1 -\lambda_j)(z_2-\lambda_j)}
\cdot \Big\langle M(z_1)\,\xi_1,
E_T\big(m(\Delta_j)\big)
M(z_2)\,{\xi_2}
\Big\rangle_\cG\\
&=\Big\langle 
\dfrac{m(z_1)-T}{n(z_1)}\,\xi_1,
\int_a^b \dfrac{n(\lambda)}{m'(\lambda) (\overline z_1-\lambda)(z_2-\lambda) } \,dE_T\big(m(\lambda)\big)
\dfrac{m(z_2)-T}{n(z_1)}\,{\xi_2}
\Big\rangle_\cG.
\end{align*}
Comparing this expression with \eqref{eq-fpg1} we conclude that
$P$ and $\Pi$ coincide in $\cD$. Therefore, the assertion of Lemma \ref{prop3} holds under the additional assumption \eqref{eq-mp0}.

Now let $[a,b]\subset J$ be an arbitrary interval satisfying \eqref{eq-cond}.
By Lemma \ref{lem1}, one can find an open  interval $(c,d)\subset[a,b]$ with the following properties:
\begin{itemize}
\item $c,d\notin\spec_p H$,
\item $m^{-1}(S_T)\subset J\subset (c,d)$,
\item $m'(x)\ne 0$ for all $x\in[c,d]$.
\end{itemize}
By the first part of the proof, we have
$E_H\big([c,d]\big)=\Pi\big([c,d]\big):=\Phi\big([c,d]\big)\Phi\big([c,d]\big)^*$.
On the other hand, the interval $[c,d]$ contains all  $\lambda\in [a,b]$ with 
$m(\lambda)\in\spec T$, or, equivalently,
all  $\lambda\in [a,b]\cap\spec H$.
This means that $E_T\big([a,c]\big)=E_T\big([d,b]\big)=\Pi\big([a,c]\big)=\Pi\big([d,b]\big)=0$, and,
finally,
\begin{multline*}
E_H\big([a,b])=
E_H\big([a,c])+E_H\big([c,d])+E_H\big([d,b])=E_H\big([c,d])\\
=\Pi\big([c,d])=\Pi\big([a,c])+\Pi\big([c,d])+\Pi\big([d,b])=
\Pi\big([a,b]).
\end{multline*}
This concludes the proof of Lemma \ref{prop3}.
\end{proof}

\section{Unitary equivalence}\label{sec-unt}

To avoid potential confusions let us introduce a definition.

\begin{defin}\label{def1}
Let $\cH_1$ and $\cH_2$ be Hilbert spaces. Let $\cL_j\subset \cH_j$
be closed linear subspaces viewed as Hilbert spaces with the induced scalar products,
and let $P_j:\cH_j\to\cL_j$ be the orthogonal projections, $j=1,2$.
We say that an operator $U\in \cL (\cH_1,\cH_2)$ 
\emph{defines a unitary map from $\cL_1$ to $\cL_2$}
if
\begin{itemize}
\item $U(\cL_1)\subset \cL_2$ and
\item the map $P_2 UP_1^*:\cL_1\to\cL_2$ is a unitary operator.
\end{itemize}
\end{defin}

\begin{lemma}\label{lem4}
Let $\cH_j$, $\cL_j$, $P_j$, $j=1,2$, be as in Definition \ref{def1}, 
and let an operator $U\in \cL(\cH_1,\cH_2)$ satisfy
\begin{equation}
 \label{eqpp12}
\text{\rm (i) }
U^* U= P_1^* P_1 \text{ and \rm (ii) }
U U^*=P_2^* P_2,
\end{equation}
then
\begin{itemize}
\item[\text{\rm(a)}]$U$ defines a unitary map from $\cL_1$ to $\cL_2$ and
\item[\text{\rm(b)}]$U P_1^*P_1= P_2^* P_2 U$.
\end{itemize}
\end{lemma}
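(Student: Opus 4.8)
The plan is to derive both conclusions directly from the two operator identities in \eqref{eqpp12}, treating $P_1^*P_1$ and $P_2^*P_2$ as the orthogonal projections onto $\cL_1$ and $\cL_2$ inside $\cH_1$ and $\cH_2$ respectively. Here I use the standing convention that $P_j:\cH_j\to\cL_j$ is the (co-restricted) orthogonal projection, so $P_j P_j^*=\Id_{\cL_j}$ while $P_j^*P_j$ is the orthogonal projection of $\cH_j$ onto $\cL_j$ viewed inside $\cH_j$. The first task is to check the two bullets in Definition \ref{def1}, namely that $U(\cL_1)\subset\cL_2$ and that $P_2UP_1^*:\cL_1\to\cL_2$ is unitary.

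First I would establish the inclusion $U(\cL_1)\subset\cL_2$. For $v\in\cL_1$ one has $P_1^*P_1 v=v$, hence by identity (ii) the vector $Uv$ satisfies $P_2^*P_2(Uv)=UU^*Uv$, and using (i) in the form $U^*U=P_1^*P_1$ together with $(P_1^*P_1)v=v$ gives $U^*Uv=v$, so $UU^*Uv=Uv$; therefore $P_2^*P_2(Uv)=Uv$, which says exactly that $Uv\in\ran(P_2^*P_2)=\cL_2$. This proves the inclusion and, as a by-product, that $U$ is isometric on $\cL_1$ since $\|Uv\|^2=\langle U^*Uv,v\rangle=\langle P_1^*P_1v,v\rangle=\|v\|^2$ for $v\in\cL_1$.

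Next I would show $W:=P_2UP_1^*:\cL_1\to\cL_2$ is unitary. Because $Uv\in\cL_2$ for $v\in\cL_1$, we have $P_2^*P_2Uv=Uv$, and composing with $P_2^*$ on the left (using $P_2^*P_2 U=U$ on $\cL_1$) shows that $P_2^*Wv=P_2^*P_2UP_1^*v=UP_1^*v$ acts as $U$ on the copy of $\cL_1$; the isometry computation above then gives $\|Wv\|_{\cL_2}=\|Uv\|=\|v\|$ for $v\in\cL_1$, so $W$ is isometric. Surjectivity follows symmetrically: for $w\in\cL_2$ put $v:=P_1U^*P_2^*w\in\cL_1$ and use identity (i) to verify $Wv=w$, exploiting $UU^*=P_2^*P_2$ to collapse the composition; alternatively, one computes the adjoint $W^*=P_1U^*P_2^*$ and checks $WW^*=\Id_{\cL_2}$ and $W^*W=\Id_{\cL_1}$ by direct substitution of (i) and (ii), which is cleaner. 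This settles part (a).

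Finally, part (b) is the operator identity $UP_1^*P_1=P_2^*P_2U$ on all of $\cH_1$. I would verify it by multiplying: from (i), $U(P_1^*P_1)=U(U^*U)=(UU^*)U=(P_2^*P_2)U$, where the middle equality is associativity and the outer equalities are exactly identities (i) and (ii). Thus (b) is essentially immediate once one writes the projections as $U^*U$ and $UU^*$. The only genuine point requiring care—the main obstacle—is keeping the distinction between $P_j$ (a map into the smaller space $\cL_j$) and $P_j^*P_j$ (the ambient orthogonal projection) straight throughout, so that the identities in \eqref{eqpp12} are applied to the correct factor; once the bookkeeping of adjoints is fixed, every step reduces to a one-line substitution.
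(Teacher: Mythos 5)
Your proof is correct and follows essentially the same route as the paper: part (b) is the one-line substitution $UP_1^*P_1=U(U^*U)=(UU^*)U=P_2^*P_2U$, and unitarity of $W=P_2UP_1^*$ is checked via $WW^*=\Id_{\cL_2}$ and $W^*W=\Id_{\cL_1}$ by direct insertion of the identities \eqref{eqpp12}. The only (minor, harmless) variation is in the inclusion $U(\cL_1)\subset\cL_2$: you exhibit $Uv$ as a fixed point of the ambient projection $P_2^*P_2$, whereas the paper argues via norms that $U(\cL_1)\cap\cL_2^\perp=\{0\}$; your version is if anything slightly more direct.
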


\begin{proof} This is an elementary result and we give the proof just for the sake of completeness.
The part (b) follows directly from \eqref{eqpp12}, so we just need to prove the part (a).
Let us show first the inclusion $U(\cL_1)\subset \cL_2$. Let $x\in\cL_1$
such that $Ux\subset \cL_2^\perp$. In view of the definition of $P_2$ the last equality means that $P_2 Ux=0$.
On the other hand, with the help of \eqref{eqpp12} we obtain:
\begin{align*}
0=\|P_2 Ux\|^2_{\cL_2}&=\langle P_2 Ux,P_2 Ux\rangle_{\cL_2}\\
&=\langle x,U^* (P_2^* P_2) U x\rangle_{\cH_2}=\langle x, U (U^* U) U x \rangle_{\cH_2}=\|x\|^2_{\cH_1},
\end{align*}
which shows the equality $U(\cL_1)\cap \cL_2^\perp=\{0\}$ and the sought inclusion.

Now we have, with the help of \eqref{eqpp12},
\begin{align*}
(P_2 U P_1^*)(P_2 U P_1^*)^*&=P_2 U (P_1^* P_1) U^* P_2^*\\
&=P_2 U (U^* U) U^* P_2^*=P_2 P_2^* \equiv\text{Id}_{\cL_2},
\end{align*}
and 
\begin{align*}
(P_2 U P_1^*)^*(P_2 U P_1^*)&=P_1 U^* (P_2^* P_2) U P_1^* \\
&=P_1 U^* (U U^*) U P_1^*=P_1 P_1^* \equiv\text{Id}_{\cL_1},
\end{align*}
which shows the unitarity of $P_2 U P_1^*$.
\end{proof}

\begin{lemma}\label{lem6}
If a segment $[a,b]\subset J$ satisfies \eqref{eq-cond}, then:
\begin{itemize}
\item[\text{\rm(a)}] $\Phi([a,b]\big)^*\Phi([a,b]\big)=E_T\Big(m\big([a,b]\big)\Big)$
\item[\text{\rm(b)}] $\Phi([a,b]\big)$ defines a unitary map from $\ran E_T\Big(m\big([a,b]\big)\Big)\subset \cG$
and $\ran E_H\big([a,b]\big)\subset\cH$,
\item[\text{\rm(c)}] $E_H([a,b]\big)=\Phi([a,b]\big)E_T\Big(m\big([a,b]\big)\Big)\Phi([a,b]\big)^*$.
\end{itemize}
\end{lemma}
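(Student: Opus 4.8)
The plan is to deduce all three assertions from the already-proven Lemma \ref{prop3} together with the abstract Lemma \ref{lem4}, the only genuinely new input being the identity in part (a). Abbreviate $\Phi:=\Phi\big([a,b]\big)$ and apply Lemma \ref{lem4} with $\cH_1=\cG$, $\cL_1:=\ran E_T\big(m([a,b])\big)$, $\cH_2=\cH$, $\cL_2:=\ran E_H\big([a,b]\big)$ and $U:=\Phi$. Since $E_T\big(m([a,b])\big)$ and $E_H\big([a,b]\big)$ are orthogonal projections, they play the roles of $P_1^*P_1$ and $P_2^*P_2$. Lemma \ref{prop3} already furnishes condition (ii) of Lemma \ref{lem4}, namely $UU^*=\Phi\Phi^*=E_H\big([a,b]\big)=P_2^*P_2$. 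Thus everything hinges on condition (i), which is precisely part (a): $U^*U=\Phi^*\Phi=E_T\big(m([a,b])\big)$.

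To prove (a) I would first reduce, exactly as in the final step of the proof of Lemma \ref{prop3}, to a subinterval $[c,d]\subset[a,b]$ with $c,d\notin\spec_{\mathrm p}H$, with $[a,b]\cap m^{-1}(S_T)\subset(c,d)$, and with $m'\neq0$ on $[c,d]$. On the complementary pieces the spectral projections $E_T\big(m(\cdot)\big)$ vanish, so that $\Phi\big([a,b]\big)=\Phi\big([c,d]\big)$ and $E_T\big(m([a,b])\big)=E_T\big(m([c,d])\big)$, and after flipping the signs of $T,m,n$ if necessary I may assume $m'>0$, hence $m$ strictly increasing, on $[a,b]$. I then compute $\Phi^*\Phi$ as the operator-norm limit of $\Phi_\Delta^*\Phi_\Delta$, where $\Phi_\Delta$ is the sum \eqref{eq-phisum} with $\xi_j=\lambda_j$; writing $A_j:=E_T\big(m(\Delta_j)\big)$ and $c_j:=\sqrt{n(\lambda_j)/m'(\lambda_j)}$, the monotonicity of $m$ makes the intervals $m(\Delta_j)$ pairwise disjoint, so that $A_jA_k=0$ for $j\neq k$.

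The decisive observation is that in $\Phi_\Delta^*\Phi_\Delta=\sum_{j,k}c_jc_k\,A_j\gamma(\lambda_j)^*\gamma(\lambda_k)A_k$ \emph{every} off-diagonal term vanishes identically, not merely in the limit: for $j\neq k$, identity \eqref{eq-mgam} gives $\gamma(\lambda_j)^*\gamma(\lambda_k)=\big(M(\lambda_k)-M(\lambda_j)\big)/(\lambda_k-\lambda_j)$, which by \eqref{eq-mspec} is an affine function of $T$ and therefore commutes with all spectral projections of $T$, so $A_j\gamma(\lambda_j)^*\gamma(\lambda_k)A_k=(\cdots)A_jA_k=0$. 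On the diagonal $\gamma(\lambda_j)^*\gamma(\lambda_j)=M'(\lambda_j)$, and \eqref{eq-mspec} gives $c_j^2M'(\lambda_j)=1-\tfrac{n'(\lambda_j)}{m'(\lambda_j)n(\lambda_j)}\big(m(\lambda_j)-T\big)$, whence
\[
\Phi_\Delta^*\Phi_\Delta=\sum_j A_j-\sum_j \frac{n'(\lambda_j)}{m'(\lambda_j)n(\lambda_j)}\,\big(m(\lambda_j)-T\big)A_j .
\]
The first sum equals $E_T\big(m([a,b))\big)=E_T\big(m([a,b])\big)$ by \eqref{eq-cond} and is independent of the partition; in the second sum each summand lives on the mutually orthogonal subspace $\ran A_j$ and has norm at most $S\,\big\|(m(\lambda_j)-T)A_j\big\|\le S\,m_1|\Delta_j|$ for a fixed $S$, so by orthogonality of the blocks the whole remainder has norm $O(\delta)\to0$. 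This yields $\Phi^*\Phi=E_T\big(m([a,b])\big)$.

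With (a) established and Lemma \ref{prop3} at hand, both hypotheses of Lemma \ref{lem4} hold, so Lemma \ref{lem4}(a) gives part (b) at once. For part (c) I would simply use (a) and then the idempotency of the projection $\Phi\Phi^*$ supplied by Lemma \ref{prop3}:
\[
\Phi\,E_T\big(m([a,b])\big)\,\Phi^*=\Phi(\Phi^*\Phi)\Phi^*=(\Phi\Phi^*)^2=E_H\big([a,b]\big)^2=E_H\big([a,b]\big).
\]
I expect the only delicate point to be the exact vanishing of the off-diagonal terms, which relies on two facts acting together: the special affine-in-$T$ structure \eqref{eq-mspec} of the Weyl function, which turns $\gamma(\lambda_j)^*\gamma(\lambda_k)$ into a function of $T$, and the orthogonality $A_jA_k=0$ stemming from the strict monotonicity of $m$. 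Securing that monotonicity, and hence the disjointness of the $m(\Delta_j)$, is exactly what forces the preliminary passage to an interval on which $m'$ does not vanish; this reduction, carried out verbatim as in Lemma \ref{prop3}, is the single technical step of the argument.
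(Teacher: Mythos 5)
Your proof is correct and follows essentially the same route as the paper: (b) and (c) are reduced to (a) via Lemma \ref{prop3} and Lemma \ref{lem4}, and (a) is obtained from the Riemann--Stieltjes sums $\Phi_\Delta^*\Phi_\Delta$, with the off-diagonal terms vanishing exactly because \eqref{eq-mgam} and \eqref{eq-mspec} make $\gamma(\lambda_j)^*\gamma(\lambda_k)$ a function of $T$ commuting with the mutually orthogonal projections $E_T\big(m(\Delta_j)\big)$, and the diagonal converging to $E_T\big(m([a,b])\big)$ up to a remainder of order $\max_j|\Delta_j|$. Your only departures are cosmetic and arguably tidier: you make explicit the preliminary reduction to a subinterval on which $m'\neq 0$ (the paper simply asserts $m_0>0$, implicitly relying on Lemma \ref{lem1} as you do), and you bound the remainder using the orthogonality of the blocks directly rather than the paper's Cauchy--Schwarz chain.
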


\begin{proof}
Taking into account Lemmas \ref{prop3} and \ref{lem4} we see that the assertions (b) and (c) follow if we prove (a).

Take again a partition $\Delta$ of $[a,b]$, $a=\lambda_0<\lambda_1<\dots<\lambda_n=b$,
denote $\Delta_j:=[\lambda_{j-1},\lambda_j)$ and consider the Riemann-Stieltjes integral sum \eqref{eq-phisum}
with $\xi_j:=\lambda_j$,
\[
\Phi_\Delta=\sum_{j=1}^n \sqrt{\dfrac{n(\lambda_j)}{m'(\lambda_j)}}\, \gamma(\lambda_j) E_T\big(m(\Delta_j)\big).
\]
As noted above, the sums $\Phi_\Delta$ converge in the operator norm to $\Phi:=\Phi\big([a,b]\big)$, the adjoint
operators $\Phi_\Delta^*$ converge then to $\Phi^*$, and the products
$\Phi_\Delta^*\Phi_\Delta$ converge to $\Phi^*\Phi$ as $\delta:=\max_j|\Delta_j|$ tends to $0$.
We have
\[
\Phi^*_\Delta \Phi_\Delta=\sum_{j,k=1}^n \sqrt{\dfrac{n(\lambda_j)}{m'(\lambda_j)}}
\sqrt{\dfrac{n(\lambda_k)}{m'(\lambda_k)}}\, E_T\big(m(\Delta_j)\big)\gamma(\lambda_j)^*\gamma(\lambda_k)\,E_T\big(m(\Delta_k)\big).
\]
Using \eqref{eq-mgam} we can write
\begin{equation*}
\gamma(\lambda_j)^*\gamma(\lambda_k)=L(\lambda_j,\lambda_k):=\begin{cases}
\dfrac{M(\lambda_k)-M(\lambda_j)}{\lambda_k-\lambda_j}, & j\ne k,\\
M'(\lambda_j)\equiv \dfrac{m'(\lambda_j)}{n(\lambda_j)} - \dfrac{n'(\lambda_j)}{n(\lambda_j)^2}\big(m(\lambda_j)-T\big),
& j=k.
\end{cases}
\end{equation*}
Note that the operators $L(\lambda_j,\lambda_k)$ commute with $T$ for any $j,k=1,\dots n$
and that we have the equality $E_T\big(m(\Delta_j)\big) E_T\big(m(\Delta_k)\big)=0$ for all $j\ne k$.
This simplifies the above expression:
\begin{align*}
\Phi^*_\Delta \Phi_\Delta&=\sum_{j=1}^n 
\dfrac{n(\lambda_j)}{m'(\lambda_j)} \, M'(\lambda_j) E\big(m(\Delta_j)\big)\\
&=
\sum_{j=1}^n 
\dfrac{n(\lambda_j)}{m'(\lambda_j)}\, \dfrac{m'(\lambda_j)}{n(\lambda_j)}\,  E\big(m(\Delta_j)\big)-
\sum_{j=1}^n 
\dfrac{n(\lambda_j)}{m'(\lambda_j)}
\dfrac{n'(\lambda_j)}{n(\lambda_j)^2}\big(m(\lambda_j)-T\big)E\big(m(\Delta_j)\big)\\
&=\sum_{j=1}^n E\big(m(\Delta_j)\big) -
\sum_{j=1}^n 
\dfrac{n(\lambda_j)}{m'(\lambda_j)}
\dfrac{n'(\lambda_j)}{n(\lambda_j)^2}\big(m(\lambda_j)-T\big)E\big(m(\Delta_j)\big)\\
&=E\Big(m\big([a,b]\big)\Big)-
\sum_{j=1}^n 
\dfrac{n(\lambda_j)}{m'(\lambda_j)}
\dfrac{n'(\lambda_j)}{n(\lambda_j)^2}\big(m(\lambda_j)-T\big)E\big(m(\Delta_j)\big).
\end{align*}
Using the constants \eqref{eq-mn} and $n_2:=\max_{x\in[a,b]}\big|n'(x)\big|$
one can estimate, for any $h\in \cG$,
\[
\Big\|
\big(m(\lambda_j)-T\big)E\big(m(\Delta_j)\big)h\Big\|_\cG\le m_1 |\Delta_j|\cdot \Big\|E\big(m(\Delta_j)\big)h\Big\|_\cG
\]
and, using the Cauchy-Schwartz inequality, we obtain
\begin{multline*}
\Big\|
\sum_{j=1}^n 
\dfrac{n(\lambda_j)}{m'(\lambda_j)}
\dfrac{n'(\lambda_j)}{n(\lambda_j)^2}\big(m(\lambda_j)-T\big)E\big(m(\Delta_j)\big)h\Big\|_\cG
\le \dfrac{n_2 m_1}{n_0 m_0}\sum_{j=1}^n |\Delta_j| \Big\|E\big(m(\Delta_j)\big)h\Big\|_\cG\\
\le \dfrac{n_2 m_1}{n_0 m_0} \sqrt{\sum_{j=1}^n |\Delta_j|}\cdot \sqrt{\sum_{j=1}^n |\Delta_j| \Big\|E\big(m(\Delta_j)\big)h\Big\|^2_\cG}\\
\le \dfrac{n_2 m_1 \sqrt{b-a}}{n_0 m_0} \max_j|\Delta_j| \sqrt{\sum_{j=1}^n \Big\|E\big(m(\Delta_j)\big)h\Big\|^2_\cG}\\
\le \dfrac{n_2 m_1 \sqrt{b-a}}{n_0 m_0} \max_j |\Delta_j|\, \Big\|E\Big(m\big([a,b]\big)\Big)h\Big\|_\cG.
\end{multline*}
Hence, $\Phi \Phi^*=\lim_{\max |\Delta_j|\to 0}
\Phi_\Delta \Phi^*_\Delta = E\Big(m\big([a,b]\big)\Big)$, which proves (a).
\end{proof}

\begin{lemma}\label{lem7}
Let intervals $[a,b]\subset J$ and $[c,d]\subset J$ be such that
\begin{itemize}
\item $a,b,c,d$ do not belong to $\spec_\mathrm{p} H$,
\item  $(a,b)\cap(c,d)=\emptyset$,
\end{itemize}
then  $\Phi\big([a,b]\big) E_T\Big( m\big([c,d]\big)\Big)=0$, 
$\Phi\big([a,b]\big)\Phi\big([c,d]\big)^*=0$ and
$\Phi\big([a,b]\big)^*\Phi\big([c,d]\big)=0$.
\end{lemma}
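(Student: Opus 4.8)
The plan is to reduce all three identities to a single fact about the spectral measure $E_T$: that for any subintervals $I_1\subset[a,b]$ and $I_2\subset[c,d]$ one has $E_T\big(m(I_1)\big)E_T\big(m(I_2)\big)=0$. Granting this, each of the three products is approximated by the Riemann-Stieltjes sums \eqref{eq-phisum}, which, as recalled in Section \ref{sec-proj}, converge in operator norm to $\Phi$ and to $\Phi^*$. In each resulting finite sum every term will (after, in one case, commuting a factor $\gamma(\lambda_j)^*\gamma(\mu_k)$ past $E_T$) contain a product of the form $E_T\big(m(\Delta_j)\big)E_T\big(m(\Delta_k')\big)$ or $E_T\big(m(\Delta_j)\big)E_T\big(m([c,d])\big)$ with $\Delta_j\subset[a,b]$ and $\Delta_k'\subset[c,d]$, and hence will vanish; passing to the limit then yields the three claimed equalities.

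The heart of the argument is therefore the vanishing $E_T\big(m(I_1)\big)E_T\big(m(I_2)\big)=0$, which I would establish as follows. First I would use $E_T\big(m(I_i)\big)=E_T\big(m(I_i)\cap S_T\big)$, valid because $E_T\big(\RR\setminus S_T\big)=0$. Since $I_1,I_2\subset J$, for $y\in S_T$ one has $y\in m(I_i)$ only if $y=m(\lambda)$ for some $\lambda\in I_i\cap m^{-1}(S_T)\subset K$, whence $m(I_i)\cap S_T=m(I_i\cap K)$. By Lemma \ref{lem1} the set $K$ is connected and $m'$ does not vanish on it, so $m$ is strictly monotone, hence injective, on $K$. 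Since $(a,b)\cap(c,d)=\emptyset$, the intersection $[a,b]\cap[c,d]$ consists of at most one point $p$, necessarily a common endpoint, so that $(I_1\cap K)\cap(I_2\cap K)\subset\{p\}$; injectivity of $m$ on $K$ then forces $m(I_1\cap K)\cap m(I_2\cap K)\subset\{m(p)\}$. Finally $p\in\{a,b\}$ is not in $\spec_\mathrm{p}H$ by hypothesis, so by \eqref{eq-cond} $m(p)\notin\spec_\mathrm{p}T$, i.e. $E_T\big(\{m(p)\}\big)=0$; consequently $0\le E_T\big(m(I_1)\big)E_T\big(m(I_2)\big)=E_T\big(m(I_1\cap K)\cap m(I_2\cap K)\big)\le E_T\big(\{m(p)\}\big)=0$.

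With this in hand I would treat the three products in turn. For $\Phi\big([a,b]\big)E_T\big(m([c,d])\big)$ and for $\Phi\big([a,b]\big)\Phi\big([c,d]\big)^*$ the argument is immediate: inserting the sums \eqref{eq-phisum} produces terms with adjacent factors $E_T\big(m(\Delta_j)\big)E_T\big(m([c,d])\big)$, respectively $E_T\big(m(\Delta_j)\big)E_T\big(m(\Delta_k')\big)$, each of which vanishes by the property just proved, the operators $\gamma(\lambda_j)$ and $\gamma(\mu_k)^*$ sitting harmlessly on the outside. For the remaining product $\Phi\big([a,b]\big)^*\Phi\big([c,d]\big)$ the two spectral factors are separated by $\gamma(\lambda_j)^*\gamma(\mu_k)$; here I would note, exactly as in the proof of Lemma \ref{lem6} via \eqref{eq-mgam}, that $\gamma(\lambda_j)^*\gamma(\mu_k)$ commutes with $T$ and hence with $E_T$, so moving it past $E_T\big(m(\Delta_j)\big)$ again creates the adjacent product $E_T\big(m(\Delta_j)\big)E_T\big(m(\Delta_k')\big)=0$ and every term vanishes. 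Passing to the norm limit in all three cases completes the proof. The only genuinely delicate point is the treatment of the possible common endpoint $p$ in the key vanishing fact; everything else is bookkeeping resting on the norm convergence of the integral sums already established in Section \ref{sec-proj}.
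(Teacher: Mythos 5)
Your proposal is correct and follows essentially the same route as the paper's proof: both approximate $\Phi\big([a,b]\big)$ and $\Phi\big([c,d]\big)$ by the Riemann--Stieltjes sums \eqref{eq-phisum}, annihilate each term of the first two products through the orthogonality of the projections $E_T\big(m(\Delta_j)\big)$ and $E_T\big(m(\Pi_k)\big)$, and, for the mixed product $\Phi\big([a,b]\big)^*\Phi\big([c,d]\big)$, use \eqref{eq-mgam} to write $\gamma(\lambda_j)^*\gamma(\mu_k)=\dfrac{M(\mu_k)-M(\lambda_j)}{\mu_k-\lambda_j}$, a function of $T$, and commute it past the spectral projections before passing to the norm limit. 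The only divergence is one of detail, and in your favor: the paper simply asserts the key vanishing ``under the assumptions made,'' whereas you derive $E_T\big(m(I_1)\big)E_T\big(m(I_2)\big)=0$ explicitly from the injectivity of $m$ on $K$ (Lemma \ref{lem1}) together with the endpoint condition \eqref{eq-cond} handling a possible common endpoint $p$ via $E_T\big(\{m(p)\}\big)=0$.
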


\begin{proof}
Take a partition $\Delta$ of $[a,b]$, $a=\lambda_0<\lambda_1<\dots<\lambda_n=b$,
denote $\Delta_j:=[\lambda_{j-1},\lambda_j)$ and consider the Riemann-Stieltjes integral sum \eqref{eq-phisum}
with $\xi_j:=\lambda_j$,
\[
\Phi_\Delta=\sum_{j=1}^n \sqrt{\dfrac{n(\lambda_j)}{m'(\lambda_j)}}\, \gamma(\lambda_j) E_T\big(m(\Delta_j)\big).
\]
Futhermore, take a partition $\Pi$ of $[c,d]$, $c=\mu_0<\mu_1<\dots<\mu_m=d$,
denote $\Pi_j:=[\mu_{j-1},\mu_j)$ and consider the Riemann-Stieltjes integral sum \eqref{eq-phisum}
with $\xi_j:=\mu_j$,
\[
\Phi_\Pi=\sum_{j=1}^m \sqrt{\dfrac{n(\mu_j)}{m'(\mu_j)}}\, \gamma(\mu_j) E_T\big(m(\Pi_j)\big).
\]
We know that $\Phi_\Delta$ and $\Phi_\Pi$ converge in the norm to $\Phi\big([a,b]\big)$
and $\Phi\big([c,d]\big)$ respectively as both $\delta:=\max_j|\Delta_j|$
and $\eps:=\max_j |\Pi_j|$ tend to $0$.

On the other hand, under the assumptions made we have $E_T\big(m(\Delta_j)\big)E_T\Big( m\big([c,d]\big)\Big)=0$
for any $j$, which implies $\Phi_\Delta E_T\Big( m\big([c,d]\big)\Big)=0$ and
$\Phi\big([a,b]\big)E_T\Big( m\big([c,d]\big)\Big)=0$.

In a similar way, using the equalities $E_T\big(m(\Delta_j)\big)E_T\big(m(\Pi_j)\big)=0$
for all $j=1,\dots,n$ and $k=1,\dots, m$, we obtain $\Phi_\Delta \Phi^*_\Pi=0$, which implies
$\Phi\big([a,b]\big)\Phi\big([c,d]\big)^*=0$.

Finally, we have, using \eqref{eq-mgam},
\begin{align*}
\Phi_\Delta^* \Phi_\Pi&=\sum_{j=1}^n \sum_{k=1}^m
\sqrt{\dfrac{n(\lambda_j)}{m'(\lambda_j)}}\cdot \sqrt{\dfrac{n(\mu_k)}{m'(\mu_k)}}
\, E_T\big(m(\Delta_j)\big) \gamma(\lambda_j)^*\gamma(\mu_k) E_T\big(m(\Pi_k)\big)\\
&= \sum_{j=1}^n \sum_{k=1}^m
\sqrt{\dfrac{n(\lambda_j)}{m'(\lambda_j)}}\cdot \sqrt{\dfrac{n(\mu_k)}{m'(\mu_k)}}
E_T\big(m(\Delta_j)\big) \dfrac{M(\mu_k)-M(\lambda_j)}{\mu_k-\lambda_j}
E_T\big(m(\Pi_k)\big)\\
&= \sum_{j=1}^n \sum_{k=1}^m
\sqrt{\dfrac{n(\lambda_j)}{m'(\lambda_j)}}\cdot \sqrt{\dfrac{n(\mu_k)}{m'(\mu_k)}}
\dfrac{M(\mu_k)-M(\lambda_j)}{\mu_k-\lambda_j}
E_T\big(m(\Delta_j)\big) 
E_T\big(m(\Pi_k)\big)=0,
\end{align*}
which proves the remaining equality $\Phi\big([a,b]\big)^*\Phi\big([c,d]\big)=0$.
\end{proof}

All the intervals we considered so far were contained in the gap $J$
together with their closures. Let us extend the above considerations to
arbitrary subintervals of $J$.

\begin{lemma}\label{lem9}
The strong limit 
\[
\Phi(J):=\slim_{\eps\to +0} \Phi\big([a_0+\varepsilon,b_0-\varepsilon]\big)
\]
exists and defines a unitary map
from $\ran E_T\big(m(J)\big)$ to $\ran E_H(J)$
\end{lemma}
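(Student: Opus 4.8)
The plan is to establish the existence of the strong limit by monotonicity of the increasing family of projections, and then to promote the unitarity property from the compact subintervals $[a_0+\eps, b_0-\eps]$ to the full gap $J$ using continuity of the spectral measures. First I would observe that by Lemma~\ref{lem6}(c) the operator $E_H([a_0+\eps,b_0-\eps])$ equals $\Phi([a_0+\eps,b_0-\eps])E_T(m([a_0+\eps,b_0-\eps]))\Phi([a_0+\eps,b_0-\eps])^*$, but more directly I would work with the projections themselves: the family $E_H([a_0+\eps,b_0-\eps])$ is monotone increasing as $\eps\to+0$ and bounded above by $E_H(J)$, so by the standard theorem on monotone nets of projections it converges strongly to $E_H(J)$ (using that $J=(a_0,b_0)$ is open, so $\bigcup_\eps [a_0+\eps,b_0-\eps]=J$ up to the endpoints, which carry no spectral mass). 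Similarly $E_T(m([a_0+\eps,b_0-\eps]))\to E_T(m(J))$ strongly.

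The next step is to show that $\Phi([a_0+\eps,b_0-\eps])$ itself, not merely the associated projection, converges strongly. Here I would exploit Lemma~\ref{lem7}: for $0<\eps'<\eps$ the difference $\Phi([a_0+\eps',b_0-\eps'])-\Phi([a_0+\eps,b_0-\eps])$ is supported, after the orthogonality relations, on the two end-slivers $[a_0+\eps',a_0+\eps]$ and $[b_0-\eps,b_0-\eps']$, whose images under $E_T\circ m$ are mutually orthogonal to those of the central block. Concretely, for a fixed $f\in\cG$, writing $g:=\Phi([a_0+\eps',b_0-\eps'])f-\Phi([a_0+\eps,b_0-\eps])f$, the isometry property Lemma~\ref{lem6}(a) together with the vanishing products of Lemma~\ref{lem7} gives $\|g\|_\cH^2=\langle f, (E_T(m([a_0+\eps',a_0+\eps]))+E_T(m([b_0-\eps,b_0-\eps'])))f\rangle_\cG$, and this tends to $0$ as $\eps,\eps'\to+0$ because the spectral measure of the shrinking end-intervals vanishes. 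This Cauchy criterion yields the strong limit $\Phi(J)$.

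It remains to verify that $\Phi(J)$ defines a unitary map from $\ran E_T(m(J))$ to $\ran E_H(J)$ in the sense of Definition~\ref{def1}, and the cleanest route is via Lemma~\ref{lem4}: I would check the two identities $\Phi(J)^*\Phi(J)=E_T(m(J))$ and $\Phi(J)\Phi(J)^*=E_H(J)$, which are exactly hypotheses (i) and (ii) of that lemma with $P_1=E_T(m(J))$ and $P_2=E_H(J)$ (both self-adjoint projections, so $P_j^*P_j=P_j$). Each identity follows by passing to the limit in the corresponding finite-interval statement: $\Phi([a_0+\eps,b_0-\eps])^*\Phi([a_0+\eps,b_0-\eps])=E_T(m([a_0+\eps,b_0-\eps]))$ from Lemma~\ref{lem6}(a), and $\Phi([a_0+\eps,b_0-\eps])\Phi([a_0+\eps,b_0-\eps])^*=E_H([a_0+\eps,b_0-\eps])$ from Lemmas~\ref{prop3} and~\ref{lem6}(c). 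With all four factors converging strongly and uniformly bounded in norm (the $\Phi$'s are uniformly bounded since the associated projections are), the products converge strongly to the claimed limits.

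The main obstacle I anticipate is the strong convergence of the products $\Phi^*\Phi$ and $\Phi\Phi^*$: strong convergence of factors does not in general pass to products, so I must invoke the uniform norm bound $\|\Phi([a_0+\eps,b_0-\eps])\|_{\cL(\cG,\cH)}\le 1$ (which holds because $\Phi^*\Phi$ is a projection, so $\Phi$ is a partial isometry) to control the cross terms of the form $(\Phi_\eps-\Phi(J))^*\Phi_\eps$ and $\Phi(J)^*(\Phi_\eps-\Phi(J))$. Given this uniform bound the passage to the limit is routine, and the identification with Definition~\ref{def1} via Lemma~\ref{lem4} then completes the argument.
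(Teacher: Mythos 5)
Your overall architecture coincides with the paper's: establish the strong limit via orthogonality of the end-slivers (the paper writes $\Phi(J_n)\xi$ as an orthogonal telescoping series, which is equivalent to your Cauchy criterion obtained from Lemma~\ref{lem6}(a) and Lemma~\ref{lem7}), then pass to the limit in the identities $\Phi^*\Phi=E_T(m(\cdot))$ and $\Phi\Phi^*=E_H(\cdot)$ and conclude with Lemma~\ref{lem4}. There is, however, one genuine gap in your final limit passage. Strong convergence $\Phi_\eps\to\Phi(J)$ does \emph{not} imply strong convergence of the adjoints $\Phi_\eps^*\to\Phi(J)^*$ (only weak convergence), and your claim that the uniform norm bound controls the cross term $(\Phi_\eps-\Phi(J))^*\Phi_\eps$ fails: writing, for a fixed vector $\xi$,
\[
(\Phi_\eps-\Phi(J))^*\Phi_\eps\,\xi=(\Phi_\eps-\Phi(J))^*(\Phi_\eps-\Phi(J))\xi+(\Phi_\eps-\Phi(J))^*\Phi(J)\xi,
\]
the first piece is indeed killed by the uniform bound, but the second requires exactly the strong convergence of the adjoints that you never establish. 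Note that the identity $\Phi(J)^*\Phi(J)=E_T\big(m(J)\big)$ actually survives without any adjoint convergence, since $\langle\Phi(J)\xi,\Phi(J)\eta\rangle=\lim_{\eps}\langle\Phi_\eps\xi,\Phi_\eps\eta\rangle=\lim_\eps\langle\xi,E_T\big(m([a_0+\eps,b_0-\eps])\big)\eta\rangle$; it is the second identity $\Phi(J)\Phi(J)^*=E_H(J)$ that genuinely needs it.

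The repair is available inside your own scheme, and it is what the paper does: run your Cauchy/sliver argument once more on the adjoints, now using Lemma~\ref{prop3} in place of Lemma~\ref{lem6}(a), namely $\|(\Phi_{\eps'}-\Phi_\eps)^*h\|_\cG^2=\big\langle h,E_H\big([a_0+\eps',a_0+\eps]\cup[b_0-\eps,b_0-\eps']\big)h\big\rangle_\cH\to 0$, which yields $\Phi(J)^*=\slim_{\eps\to+0}\Phi_\eps^*$; the paper records this as a separate step (``proceeding as above''). Alternatively, the paper sidesteps moving products altogether by a cleaner device: for $n<m$ one has $\Phi(J_n)^*\Phi(J_m)=E_T\big(m(J_n)\big)$ and $\Phi(J_n)\Phi(J_m)^*=E_H(J_n)$ with the left-hand sides \emph{constant} in $m$, so one takes $m\to\infty$ with $n$ fixed (only one factor moves) and then $n\to\infty$. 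A smaller omission: Lemmas~\ref{lem6} and~\ref{lem7} require the interval endpoints to satisfy \eqref{eq-cond}, so the exhausting values of $\eps$ must be chosen with $a_0+\eps,\,b_0-\eps\notin\spec_\mathrm{p}H$, as the paper does with its sequence $(\eps_n)$; independence of the limit from this choice, and the extension to arbitrary $\eps\to+0$, then follow from the vanishing spectral mass of the shrinking end intervals, exactly as in your sliver estimate.
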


\begin{proof}
Let us take a monotonically decreasing sequence of $(\varepsilon_n)_{n\ge 0}$ converging to 0
such that $a_0+\varepsilon_n,b_0-\varepsilon_n\notin\spec_\mathrm{p} H$. Denote $J_n:=[a_0+\varepsilon_n,b-\varepsilon_n]$
and
\[
\Phi(J_n\setminus J_{n-1}):=
\Phi\big([a_0+\varepsilon_n,a_0+\varepsilon_{n-1}]\big)
+
\Phi\big([b_0-\varepsilon_{n-1},b_0-\eps_n]\big).
\]
Let $\xi\in \cG$, then
\begin{equation}
     \label{eq-fser}
\Phi(J_n)\xi=\Phi(J_0)\xi +\sum_{j=0}^n \Phi(J_{n}\setminus J_{n-1})\xi.
\end{equation}
By Lemma \ref{lem7}, the summands on the right-hand side of \eqref{eq-fser} are paarwise orthogonal.
Moreover, by Lemma \ref{lem6}(a) we have
\[
\Big\|\Phi(J_0)\xi\Big\|^2_\cH=
\Big\|E_T \big(m(J_0)\big)\xi\Big\|^2_\cG
\text{ and }
\Big\|\Phi(J_{n}\setminus J_{n-1})\xi\Big\|^2_\cH=
\Big\|E_T \big(m(J_{n}\setminus J_{n-1})\big)\xi\Big\|^2_\cG,
\]
and once can estimate
\begin{align*}
\Big\|\Phi(J_0)\xi\Big\|^2_\cH
+\sum_{j=0}^n \Big\|\Phi(J_{n}\setminus J_{n-1})\xi\Big\|^2_\cH
&=\Big\|E_T \big(m(J_0)\big)\xi\Big\|^2_\cG+
\sum_{j=0}^n \Big\|E_T \big(m(J_{n}\setminus J_{n-1})\big)\xi\Big\|^2_\cG\\
&=\Big\|E_T\big(m(J_n)\big)\xi\Big\|^2_\cG\le \Big\|E_T\big(m(J)\big)\xi\Big\|^2_\cG<\infty.
\end{align*}
Hence, the series on the right-hand side of \eqref{eq-fser} converges, and
one shows that the limit is in fact independent of the choice of the sequence $\varepsilon_n$
in the standard way using the relations
\[
\slim_{\varepsilon\to +0} E_T\big((a_0,a_0+\varepsilon)\big)
=
\slim_{\varepsilon\to +0} E_T\big((b_0-\varepsilon,b_0)\big)
=0.
\]
Therefore, the map $\Phi(J)$ is well defined. Let us show that
the assumptions of Lemma \ref{lem4} are satisfied.
First, proceeding as above we can show the equality
\[
\Phi(J)^*:=\slim_{\varepsilon\to +0} \Phi\big([a_0+\varepsilon,b_0-\varepsilon]\big)^*.
\]
Second, by combining the results of Lemma \ref{prop3}, Lemma \ref{lem6}(a) and Lemma \ref{lem7}
we see that for $n<m$ one has the identities
$\Phi(J_n)^* \Phi(J_m)=E_T\big(m(J_n)\big)$  and 
$\Phi(J_n) \Phi(J_m)^*=E_H(J_n)$.
Taking first the strong limit for $m\to +\infty$ and then the strong limit as $n\to +\infty$, we arrive
at $\Phi(J)^* \Phi(J)=E_T\big(m(J)\big)$ and $\Phi(J) \Phi(J)^*=E_H(J)$,
which gives the conclusion by applying Lemma \ref{lem4}.
\end{proof}

The following theorem summarizes the above constructions.

\begin{theorem}\label{thm10}
For any borelian subset $\Omega$ of $J$ one has $E_H(\Omega)=\Phi(J) E_T\big(m(\Omega)\big)\Phi(J)^*$.
\end{theorem}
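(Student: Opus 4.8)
The plan is to show that both $\Omega\mapsto E_H(\Omega)$ and $\Omega\mapsto F(\Omega):=\Phi(J)E_T\big(m(\Omega)\big)\Phi(J)^*$ are projection-valued measures on the Borel subsets of $J$ that coincide on a generating $\pi$-system of closed intervals, and then to invoke the uniqueness of a measure determined by its values on a $\pi$-system. The bulk of the work lies in establishing the agreement on intervals and in checking that $F$ is genuinely a countably additive, projection-valued measure.

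First I would treat a closed interval $[a,b]$ whose closure lies in $J$ and whose endpoints satisfy \eqref{eq-cond}. The key identity is $\Phi(J)E_T\big(m([a,b])\big)=\Phi\big([a,b]\big)$. Indeed, Lemma~\ref{lem6}(a) gives $\Phi([a,b])^*\Phi([a,b])=E_T\big(m([a,b])\big)$, so $\Phi([a,b])$ annihilates $\ker E_T\big(m([a,b])\big)$ and hence $\Phi([a,b])E_T\big(m([a,b])\big)=\Phi([a,b])$; splitting a large $J_n=[a_0+\eps_n,b_0-\eps_n]\supset[a,b]$ into $[a_0+\eps_n,a]$, $[a,b]$ and $[b,b_0-\eps_n]$ with good intermediate endpoints and using Lemma~\ref{lem7} to kill the two outer contributions after multiplication by $E_T\big(m([a,b])\big)$, one obtains $\Phi(J_n)E_T\big(m([a,b])\big)=\Phi([a,b])$, and passing to the strong limit $n\to\infty$ via Lemma~\ref{lem9} yields the claim. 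Taking adjoints gives $E_T\big(m([a,b])\big)\Phi(J)^*=\Phi([a,b])^*$, and since $E_T\big(m([a,b])\big)$ is a projection,
\[
F([a,b])=\Phi(J)E_T\big(m([a,b])\big)E_T\big(m([a,b])\big)\Phi(J)^*=\Phi([a,b])\Phi([a,b])^*=E_H([a,b]),
\]
the last equality being Lemma~\ref{prop3}.

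Next I would verify that $F$ is a projection-valued measure on the Borel subsets of $J$. The only delicate point is that $m$ need not be injective on all of $J$, so that $m(\Omega)$ and the additivity of $\Omega\mapsto E_T\big(m(\Omega)\big)$ require justification. Here I would exploit that $E_T$ is concentrated on $S_T$: with $K:=m^{-1}(S_T)\cap J$, every $x\in J\setminus K$ satisfies $m(x)\notin S_T$, so $E_T\big(m(\Omega)\big)=E_T\big(m(\Omega\cap K)\big)$ for all $\Omega$. By Lemma~\ref{lem1} the set $K$ is connected and $m'$ does not vanish on it, hence $m\uhr K$ is strictly monotone, a homeomorphism onto an interval, and in particular a Borel isomorphism; consequently $\Omega\mapsto E_T\big(m(\Omega\cap K)\big)$ is a well-defined, countably additive, projection-valued measure. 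Conjugation by $\Phi(J)$ together with $\Phi(J)^*\Phi(J)=E_T\big(m(J)\big)$ from Lemma~\ref{lem9} then shows that each $F(\Omega)$ is an orthogonal projection and that $F$ is countably additive in the strong sense, i.e.\ a projection-valued measure.

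Finally I would conclude by a standard uniqueness argument. The closed intervals $[a,b]\subset J$ with $a,b\notin\spec_\mathrm{p} H$ form a $\pi$-system generating the Borel $\sigma$-algebra of $J$ (the excluded endpoints being at most countably many), and by the first step $E_H$ and $F$ coincide on this $\pi$-system. For fixed $f,g\in\cH$ the maps $\Omega\mapsto\langle f,E_H(\Omega)g\rangle_\cH$ and $\Omega\mapsto\langle f,F(\Omega)g\rangle_\cH$ are finite complex Borel measures agreeing on this $\pi$-system, so they agree on all Borel subsets of $J$; since $f,g$ are arbitrary this gives $E_H(\Omega)=F(\Omega)$, which is the assertion. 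The main obstacle is the measure-theoretic step of the preceding paragraph: one must avoid both the failure of injectivity of $m$ on $J$ and the fact that continuous images of Borel sets need not be Borel, and both difficulties are circumvented by restricting to $K$, where $m$ is a monotone homeomorphism.
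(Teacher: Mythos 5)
Your proposal is correct, and its core coincides with the paper's argument: for a closed interval $[a,b]\subset J$ with $a,b\notin\spec_{\mathrm{p}}H$, the paper likewise sandwiches $[a,b]$ inside $J_m,J_n$, uses Lemma~\ref{lem6} and Lemma~\ref{lem7} to obtain $E_H([a,b])=\Phi(J_m)E_T\big(m([a,b])\big)\Phi(J_n)^*$ (your identity $\Phi(J)E_T\big(m([a,b])\big)=\Phi([a,b])$, derived from $\Phi^*\Phi=E_T\big(m([a,b])\big)$ and the vanishing of the outer pieces, is exactly the mechanism behind this), and then passes to the repeated strong limit via Lemma~\ref{lem9}. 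Where you genuinely diverge is the extension step: the paper disposes of general borelian $\Omega$ with a one-line appeal to $\sigma$-additivity, whereas you make this rigorous by checking that $\Omega\mapsto\Phi(J)E_T\big(m(\Omega)\big)\Phi(J)^*$ is itself a projection-valued measure and invoking uniqueness on the $\pi$-system of admissible closed intervals (the countability of $\spec_{\mathrm{p}}H$ guaranteeing that this $\pi$-system generates the Borel $\sigma$-algebra of $J$ and contains the exhausting sequence $J_n$). Your observation that one must restrict to $K=m^{-1}(S_T)\cap J$, where Lemma~\ref{lem1} makes $m$ a monotone homeomorphism and hence a Borel isomorphism — so that $m(\Omega)$ is effectively Borel and $\Omega\mapsto E_T\big(m(\Omega\cap K)\big)$ is countably additive despite $m$ being neither injective nor Borel-image-preserving on all of $J$ — addresses a real measurability point that the paper leaves implicit, and is the main added value of your write-up. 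Two small points you could tighten: the additivity $\Phi(J_n)=\Phi([a_0+\eps_n,a])+\Phi([a,b])+\Phi([b,b_0-\eps_n])$ deserves a sentence (it follows from the Riemann--Stieltjes definition because $E_T(\{m(a)\})=E_T(\{m(b)\})=0$ under \eqref{eq-cond}, so partitions may be refined through $a$ and $b$), and the uniqueness step for the complex measures $\Omega\mapsto\langle f,E_H(\Omega)g\rangle$ is cleanest if you first polarize to the positive measures with $f=g$ and then apply Dynkin's lemma.
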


\begin{proof}
Assume first that $\Omega\subset J$  is a closed interval whose endpoints are not eigenvalues of $H$.
Take the same family of intervals $(J_n)$ as in the above proof of Lemma~\ref{lem9}, then
for sufficiently large $m$ and $n$ one has $\Omega\subset J_m$ and $\Omega\subset J_n$. 
By Lemma~\ref{lem6} and Lemma~\ref{lem7} we have
$E_H(\Omega)=\Phi(J_m) E_T\big(m(\Omega)\big)\Phi(J_n)^*$.
Taking the repeated strong limit, first for $n\to\infty$ and then for $m\to\infty$, one arrives
at $E_H(\Omega)=\Phi(J) E_T\big(m(\Omega)\big)\Phi(J)^*$. This identity is then extended to all borelian
subsets $\Omega\subset J$
using the $\sigma$-additivity.
\end{proof}

As an immediate corollary we have the following relation between $H$ and $T$:

\begin{corol}\label{corol11}
Introduce the orthogonal projections $P_H:\cH\to \ran E_H(J)$ and
$P_T:\cG\to \ran E_T\big(m(J)\big)$, then the operators $m(H_J)$
and $T_{m(J)}$ are related by
$m(H_J)=U T_{m(J)}U^*$,
where $U$ is the unitary operator from $\ran E_T\big(m(J)\big)$
to $\ran E_H(J)$ defined by $U:=P_H \Phi(J) P_T^*$.
\end{corol}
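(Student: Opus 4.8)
The plan is to reduce the statement to an intertwining relation between spectral measures and then appeal to the uniqueness part of the spectral theorem. By Lemma \ref{lem9} together with Definition \ref{def1}, the operator $U=P_H\Phi(J)P_T^*$ is unitary from $\ran E_T\big(m(J)\big)$ onto $\ran E_H(J)$; in fact $U$ is just the restriction/corestriction of $\Phi(J)$, since $\Phi(J)^*\Phi(J)=E_T\big(m(J)\big)$ and $\Phi(J)\Phi(J)^*=E_H(J)$ force $\Phi(J)\xi\in\ran E_H(J)$ for $\xi\in\ran E_T\big(m(J)\big)$. Because $U$ is unitary, $U\,E_{T_{m(J)}}(\cdot)\,U^*$ is exactly the spectral measure of the self-adjoint operator $U T_{m(J)}U^*$, so the corollary will follow once I show, for every Borel set $B\subset\RR$, the identity $E_{m(H_J)}(B)=U\,E_{T_{m(J)}}(B)\,U^*$: matching spectral measures then forces $m(H_J)=U T_{m(J)}U^*$.

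First I would rewrite both sides in terms of $E_H$ and $E_T$. Since $J$ is bounded, $H_J$ is a bounded self-adjoint operator on $\ran E_H(J)$ with $\spec H_J\subset\overline J$, and $m$ is continuous there, so $m(H_J)$ is a bounded self-adjoint operator and the composition rule for the Borel functional calculus applies: $E_{m(H_J)}(B)=E_{H_J}\big(m^{-1}(B)\big)$. Viewed as an operator on $\ran E_H(J)$ this is the compression $P_H\,E_H\big(m^{-1}(B)\cap J\big)\,P_H^*$. In the same way, the spectral measure of $T_{m(J)}$ is $E_{T_{m(J)}}(B)=P_T\,E_T\big(B\cap m(J)\big)\,P_T^*$, since $E_T(B)E_T\big(m(J)\big)=E_T\big(B\cap m(J)\big)$.

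The central step is to feed $\Omega:=m^{-1}(B)\cap J$, which is a Borel subset of $J$, into Theorem \ref{thm10}, giving $E_H(\Omega)=\Phi(J)E_T\big(m(\Omega)\big)\Phi(J)^*$. The elementary set identity $m\big(m^{-1}(B)\cap J\big)=B\cap m(J)$ identifies $m(\Omega)$ with $B\cap m(J)$. Compressing with $P_H$, and using $B\cap m(J)\subset m(J)$ to absorb $P_T^*P_T=E_T\big(m(J)\big)$ on both sides so that $E_T\big(B\cap m(J)\big)=P_T^*\,E_{T_{m(J)}}(B)\,P_T$, I obtain
\[
E_{m(H_J)}(B)=P_H\Phi(J)P_T^*\,E_{T_{m(J)}}(B)\,P_T\Phi(J)^*P_H^*=U\,E_{T_{m(J)}}(B)\,U^*,
\]
which is precisely the intertwining identity sought, and the proof closes as explained in the first paragraph.

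The routine points — that $H_J$ is bounded, that $m^{-1}(B)\cap J$ is Borel so Theorem \ref{thm10} applies, and the bookkeeping with the inclusions $P_H^*,P_T^*$ — present no difficulty. The one place deserving care, which I regard as the main (mild) obstacle, is the change of variables underlying $E_{m(H_J)}(B)=E_{H_J}\big(m^{-1}(B)\big)$ and the passage from $m(\Omega)$ to the Borel set $B\cap m(J)$ inside $E_T$: one must ensure that the push-forward of the calculus under the possibly non-injective map $m$ is legitimate. This is guaranteed here because $E_T$ is supported in $S_T$ and, by Lemma \ref{lem1}, $m$ is strictly monotone, hence a Borel isomorphism, on $K=m^{-1}(S_T)\cap J$, so only the portion of $m$ on which it is a bijection actually contributes to the spectral measure $E_T$.
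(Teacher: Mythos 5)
Your proposal is correct, and while it rests on the same two inputs as the paper (Theorem \ref{thm10} and the unitarity of $U=P_H\Phi(J)P_T^*$ from Lemma \ref{lem9}), the derivation runs along a genuinely different track. The paper's proof is a short functional-calculus computation: it writes $m(H_J)=P_H\int_J m(\lambda)\,dE_H(\lambda)\,P_H^*$, substitutes Theorem \ref{thm10} under the integral sign to replace $dE_H(\lambda)$ by $\Phi(J)\,dE_T\big(m(\lambda)\big)\,\Phi(J)^*$, performs the change of variables $\int_J m(\lambda)\,dE_T\big(m(\lambda)\big)=\int_{m(J)}\lambda\,dE_T(\lambda)$, and inserts $P_T^*P_T=E_T\big(m(J)\big)$. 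You instead verify the intertwining of the projection-valued measures set by set, via the push-forward rule $E_{m(H_J)}(B)=E_{H_J}\big(m^{-1}(B)\big)$, the exact set identity $m\big(m^{-1}(B)\cap J\big)=B\cap m(J)$, and Theorem \ref{thm10} applied to $\Omega=m^{-1}(B)\cap J$, and you conclude by uniqueness of the spectral measure. What your route buys is rigor at precisely the points the paper leaves implicit: you avoid manipulating the operator-valued Stieltjes integral (in particular the change-of-variables step, done in the paper without comment), and you address why $m(\Omega)$ may be treated as the Borel set $B\cap m(J)$ despite $m$ being non-injective off $K$, using Lemma \ref{lem1} and the fact that $E_T$ is supported in $S_T$; as a by-product you obtain the full spectral-measure equivalence $E_{m(H_J)}(B)=U\,E_{T_{m(J)}}(B)\,U^*$, slightly more than the corollary asserts. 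The paper's computation is, in exchange, much shorter. One cosmetic inaccuracy in your write-up: $m$ need not be continuous on $\overline J$, since the endpoints $a_0,b_0$ lie in $\spec H^0$ where $m$ may be singular; this is harmless, because $E_{H_J}$ assigns no mass to $\{a_0,b_0\}$ and $m$ maps $\spec H\cap J$ into the bounded set $S_T$, so $m$ is $E_{H_J}$-essentially bounded and $m(H_J)$ is indeed a bounded self-adjoint operator — but the justification should go through essential boundedness rather than continuity on $\overline J$.
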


\begin{proof}
The unitarity of the map $U$ was already shown in Lemma \ref{lem9}, and
the requested equality is obtained from the spectral theorem and Theorem \ref{thm10} as follows
\begin{multline*}
m(H_J)=P_H \int_J m(\lambda)\, d E_H(\lambda) P_H^*\\
=P_H \Phi(J) \int_J m(\lambda) \, dE_T\big(m(\lambda)\big) \Phi(J)^*P_H^*=P_H \Phi(J) \int_{m(J)} \lambda \,dE_T(\lambda) \Phi(J)^*P_H^*\\
=P_H \Phi(J) P_T^* \int_{\RR} \lambda \,dE_{T_{m(J)}}(\lambda) P_T \Phi(J)^*P_H^*=U T_{m(J)} U^*. \qedhere
\end{multline*}
\end{proof}

\begin{remark}
The assertions of Corollary \ref{corol11} can also be rewritten in an equivalent form as
$H_J=U m^{-1}\big(T_{m(J)}\big)U^*$.
Taking into account the identity
\[
m^{-1}\big(T_{m(J)}\big)=\int_J \lambda dE\big(m(\lambda)\big)
\]
and applying the elementary arguments with the Riemann-Stieltjes integral sums
one can represent $H_J$ as a two-side operator Riemann-Stieltjes integral
\[
H_J=P_H\int_J \dfrac{\lambda n(\lambda)}{m'(\lambda)}\, \gamma(\lambda)dE_T\big(m(\lambda)\big)\gamma(\lambda)^*P_H^*,
\]
which may be viewed as a direct generalization of Proposition \ref{prop1} to the case of arbitrary spectra.
\end{remark}

\section{Applications}\label{sec-appl}

\subsection{Direct sums and arrays of quantum dots}

Let $L$ be a closed symmetric densely defined operator in a Hilbert space $\cK$ with deficiency indices $(1,1)$, and let
 $(\CC, \pi,\pi')$ be a boundary triplet for $L$; we denote by $\nu$ and $m$ be the associated
 $\gamma$-field and Weyl function, respectively. Denote by $K^0$ the self-adjoint extension
 of $L$ defined by the boundary conditions $\pi f=0$. 

Now let $\cA$ be a non-empty countable set. Introduce a Hilbert space $\cH$
and an operator $S$ in $\cH$ by
\[
\cH:=\bigoplus_{\alpha\in \cA}\cH_\alpha, \quad S=\bigoplus_{\alpha\in\cA}S_\alpha,
\]
where each $\cH_\alpha$ is a copy of $\cK$ and $S_\alpha:=L$. Elementary considerations show that $S$ is a closed densely defined
symmetric operator whose deficiency indices are $\big(a,a\big)$, where $a$ is the cardinality of the set $\cA$, and
as a boundary triplet for $S$ one can take
$(\cG,\Gamma,\Gamma')$ with
\[
\cG:=\ell^2(\cA), \quad
\Gamma (f_\alpha)=(\pi f_\alpha), \quad
\Gamma'(f_\alpha)=(\pi'f_\alpha).
\]
Clearly, the associated $\gamma$-field $\gamma(z)$
is just the direct sum, $\gamma(z)(\xi_\alpha)=\big(\nu(z)\xi_\alpha\big)$,
and the Weyl function is $M(z)=m(z)\Id$, where $\Id$ the identity operator in $\cG$. We refer to \cite[Section 3]{KM}
for a detailed discussion of boundary triplet machinery for direct sums. We also remark
that the above construction exhausts, up to a unitary equivalence, all the cases in which
the Weyl function is of scalar type, i.e. is just the multiplication by a scalar function, see \cite{ABMN}.

Our aim now is to study the  self-adjoint extensions of $S$.
Note first that, due to the above construction,
the distinguished extension of $S$ defined by $H^0:=S^*\uhr{\ker \Gamma}$ is just the direct
sum of the copies of the operator $K^0$ and, in particular, one has the equality
$\spec H^0=\spec K^0$.
Now let $T$ be a bounded self-adjoint operator in $\cG$. Consider the self-adjoint operator
\[
H_T:=S^*\uhr{\ker(\Gamma'-T\Gamma)};
\]
it is known that $H_T$ is a self-adjoint extension of $S$ \cite{DM}, and we are going to show
that its spectral analysis is covered by the scheme of the present paper.
Define
\[
\Gamma_T:=\Gamma, \quad \Gamma'_T:=\Gamma'-T\Gamma,
\]
then one can easily see that $(\cG,\Gamma_T,\Gamma'_T)$ is another boundary triplet for $S$, that
the associated $\gamma$-field is the same as for $(\cG,\Gamma,\Gamma')$,
and that the associated Weyl function $M_T$ has the form
$M_T(z):=m(z)-T$. At the same time, with respect to this new boundary triplet
the operator $H^0$ corresponds to the boundary conditions $\Gamma_T f=0$,
and $H_T$ corresponds to the boundary conditions $\Gamma'_Tf=0$.

The physical interpretation of the above situation is as follows. The operator
$K^0$ may be viewed as a Hamiltonian of a single quantum dot, and 
a copy of the quantum dot is placed at each node of a discrete structure $\cA$.
Then the operator $H^0$  describes the array of non-interacting quantum dots, while
$H_T$ is Hamiltonian of the quantum dots interacting with each other
through the boundary conditions
$\Gamma'f=T\Gamma f$, and one can be interested in the dependence of the spectral properties of $H_T$
on the ``interaction parameter'' $T$ describing the inter-node interactions. Such an approach is known under the name
``restriction-extension procedure'' which proved its usefulness in the study
of solvable models in quantum mechanics \cite{AGHH,DEG,pavlov}.
So Corollary \ref{corol11} being applied to the present situation gives the following results:
\begin{theorem}
For any bounded self-adjoint operator $T$ in $\ell^2(\cA)$ and any interval $J\subset \RR\setminus\spec K^0$
one has $(H_T)_J=U m^{-1}\big(T_{m(J}\big)U^*$,
where $U$ acts as
\[
U =\int_J \sqrt{\dfrac{1}{m'(\lambda)}}\, \gamma(\lambda) dE_T\big(m(\lambda)\big)
\] 
being considered as a unitary map from $\ran E_T\big(m(J)\big)$ to $\ran E_{H_T}(J)$.
\end{theorem}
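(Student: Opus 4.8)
The plan is to recognize the assertion as a direct instance of Corollary \ref{corol11}, obtained by placing the present situation into the abstract framework of Sections \ref{sec-proj}--\ref{sec-unt}. The key observation is that the modified boundary triplet $(\cG,\Gamma_T,\Gamma'_T)$ already constructed above is precisely of the type required by the general theory. First I would record that its Weyl function $M_T(z)=m(z)-T$ fits the special representation \eqref{eq-mspec} with scalar numerator function $m$ (the Weyl function of the single operator $L$) and constant denominator $n\equiv 1$; this is an admissible denominator, being holomorph outside $\spec K^0$ and nowhere vanishing. Correspondingly, the $\gamma$-field entering the abstract scheme is the direct sum $\gamma(z)(\xi_\alpha)=\big(\nu(z)\xi_\alpha\big)$.

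Next I would verify the two structural hypotheses. For the spectral gap, note that with respect to the triplet $(\cG,\Gamma_T,\Gamma'_T)$ one has $\Gamma_T=\Gamma$, so the reference operator $S^*\uhr{\ker\Gamma_T}$ coincides with $H^0$, which is the direct sum of copies of $K^0$ and hence satisfies $\spec H^0=\spec K^0$; the assumption $J\subset\RR\setminus\spec K^0$ then says exactly that $J$ is a spectral gap of the reference operator. For the perturbed operator, $S^*\uhr{\ker\Gamma'_T}=S^*\uhr{\ker(\Gamma'-T\Gamma)}=H_T$, so that the abstract operator $H$ is identified with $H_T$. The only genuinely analytic point to check is the hypothesis of Lemma \ref{lem1}, namely $n(x)\,m'(x)>0$ on $K=m^{-1}(S_T)\cap J$. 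Since $n\equiv 1$, this amounts to $m'>0$ on $K$, which holds because $m$ is a scalar Nevanlinna--Herglotz function and is therefore real and strictly increasing on the gap $J$. In particular the scalar weight $\sqrt{1/m'(\lambda)}$ appearing in the statement is well defined on the support of the measure $E_T(m(\cdot))$.

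With these identifications in place, the machinery of Lemmas \ref{prop3}--\ref{lem9}, Theorem \ref{thm10}, and Corollary \ref{corol11} applies verbatim. The operator $\Phi(J)$ of Lemma \ref{lem9} becomes
\[
\Phi(J)=\int_J \sqrt{\dfrac{1}{m'(\lambda)}}\,\gamma(\lambda)\,dE_T\big(m(\lambda)\big),
\]
since $n\equiv 1$, and Corollary \ref{corol11} together with the subsequent Remark yields $(H_T)_J=U\,m^{-1}\big(T_{m(J)}\big)\,U^*$, where $U=P_{H_T}\Phi(J)P_T^*$ is, in the sense of Definition \ref{def1}, the operator $\Phi(J)$ viewed as a unitary map from $\ran E_T\big(m(J)\big)$ to $\ran E_{H_T}(J)$. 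This is exactly the claimed formula.

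I do not expect any serious obstacle here: the content of the theorem is entirely contained in the general results, and the proof is a matter of checking that the special features of this example---the scalar-type Weyl function, the direct-sum $\gamma$-field, and the spectral gap inherited from $K^0$---match the abstract hypotheses. If anything requires care it is the verification that the constant denominator $n\equiv 1$ is admissible and that $m'>0$ on the relevant set, both of which follow immediately from the Herglotz nature of the scalar Weyl function $m$.
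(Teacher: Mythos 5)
Your proposal is correct and follows exactly the paper's route: the paper proves this theorem simply by invoking Corollary \ref{corol11} for the shifted boundary triplet $(\cG,\Gamma_T,\Gamma'_T)$ with Weyl function $M_T(z)=m(z)-T$, i.e.\ representation \eqref{eq-mspec} with $n\equiv 1$. Your explicit verification of the hypotheses (reference operator $H^0$ with $\spec H^0=\spec K^0$, admissibility of the constant denominator, and $m'>0$ on the gap via the Herglotz property of $m$) is precisely what the paper leaves implicit, so there is nothing to add.
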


For example, the papers \cite{GP1,GP2} study the case when $K^0$ is a magnetic harmonic oscillator,
\[
\cK=L^2(\RR^2), \quad K^0=-\Big(\dfrac{\partial}{\partial x_1} -iB x_2\Big)^2 -\dfrac{\partial^2}{\partial x_2^2} +\omega^2 (x_1^2+x_2^2), \quad
\omega,B>0,
\]
and $L$ is the restriction of $K^0$ on the functions vanishing at the origin. Under a certain standard choice
of the boundary triplet, the associated maps $\nu(z)$ and $m(z)$ are of the form
\[
\nu(z)\xi=\xi G(\cdot,0;z), \quad
m(z)=\lim_{x\to 0}\Big(
G(x,0;z)+\dfrac{\log|x|}{2\pi}
\Big)
\]
where $G$ is the Green function of $K^0$, i.e. the integral kernel of the resolvent $(K^0-z)^{-1}$
for $z\notin\spec K^0$; explicit analytic expressions for $G$ and $m$ are given in \cite{GP1}.
The operator $K^0$ has a discrete spectrum, which means that $\spec H^0$ is a discrete set.
Outside this ``forbidden set'' the Hamiltonian $H_T$ is completely described in terms of $T$.

\subsection{Differential operators on networks}

Another series of examples comes from the study of differential operators on networks, also called
metric graphs or quantum graphs, see the recent monograph \cite{BK} and the collections \cite{AGA,P12}. Let us describe a representative particular
case where our machinery works; we use the notation proposed in \cite{CW}.

Let $X$ be a countable connected graph with symmetric neighborhood relation $\sim$
and without loops and multiple edges. We shall view it as a one-complex, where each egde
is a homeomorphic copy of the unit interval,  and the edges are glued together
at common endpoints. We write $X^0$ for the vertex set and $X^1$ for the one-skeleton of $X$.
Every point of $X^1$ is of the form $(xy,t)$, the point at distance $t$ from $x$ on the non-oriented edge
$[x,y]=[y,x]$, where $t\in[0,1]$ and $x,y,\in X^0$ with $x\sim y$. Thus
one has $(xy,0)=x$ and $(xy,t)=(yx,1-t)$. In this way, the discrete graph metric
on the vertex set (minimal length = number of edges of a connecting path)
has a natural extension to $X^1$. 

On $X^0$ one has a natural discrete measure $m^0$ defined by
\[
m^0(x)=\#\{y\in X^0:\,y\sim x\}
\]
and we assume that 
\[
m^0(x)<\infty \text{ for any } x\in X^0.
\]
On $X^1$ we introduce the continuous Lebesgue measure $m^1$ which at
the point $(xy,t)$ is given by $dt$ if $t\in(0,1)$, and the vertex set
has $m^1$-measure equal to zero. The graph $X$ equipped with the above constructions
will be called a network or a metric graph.

For any function $F:X^1\to\CC$ and for $x\in X^0$ and $y\sim x$ denote by $F_{xy}$ the function
$t\mapsto F(xy,t)$, then the Hilbert space $L^2(X^1,m^1)$ is exactly the space of the measurable
functions $F$ such that
\[
\|F\|^2_{L^2(X^1,m^1)}:=\sum_{x\sim y}\|F_{xy}\|^2_{L^2(0,1)}<\infty;
\]
we assume that each edge appears only once in the sum.
Associated with a network, there are at least two natural operators.
The first one is the discrete transition operator $P$ acting on functions $g:X^0\to \CC$
by
\begin{equation*}
P g(x)=\dfrac{1}{m^0(x)}\sum_{y:y\sim x} g(y).
\end{equation*}
The above expression defines a bounded self-adjoint operator in $\ell^2(X^0,m^0)$
with the norm $\le 1$, to be denoted by the same symbol $P$.
The second operator is the continuous (positive) Laplace operator $L$ acting in the space
$L^2(X^1,m^1)$ as the second derivative. More precisely,
for any function $F$ the prime sign will denote the derivation
with respect to the length parameter, i.e.
\[
F'(xy,t):=F'_{xy}(t).
\]
We introduce the space
\[
\Hat H^k(X^1,m^1):=\Big\{
F\in \ELL^2(X^1,m^1):\, F^{(k)}\in \ELL^2(X^1,m^1)
\Big\},
\]
then the operator $L$ acts as
$L F=-F''$ on functions
$F\in \Hat H^2(X^1,m^1)$
 satisfying
the boundary conditions
\begin{gather} 
              \label{eq-fcont}
F(xu,0)=F(xv,0)=: F(x) \text{ for all } x,u,v\in X^0 \text{ with } u\sim x \text{ and } v\sim x,\\
F'(x)=0 \text{ for all } x\in X^0, \quad
\text{where }
F'(x):=\sum_{y:y\sim x} F'(xy,0+).
\nonumber
\end{gather}
It has been  known for a long time that the discrete operator $P$ and the network laplacian $L$ are closely related \cite{vB,E97,nic,C97}.
In particular, for the case of a finite $X$ it was shown in \cite{vB} that
\[
\spec L \setminus \Sigma=\big\{
z\notin\Sigma: \, \cos\sqrt z\in\spec P
\big\} \quad \text{with}\quad \Sigma:=\big\{(\pi n)^2:\, n\in\NN\big\},
\]
and it was extended in \cite{C97} to the case of arbitrary graphs.
The set $\Sigma$ plays a special role, and its relation with the spectrum of $L$
is also known but the presentation of the respective results would need
some special vocabulary from the graph theory; an interested reader may consult the respective section in \cite{C97}.
 It was pointed out in the author's paper \cite{KP06}
that the boundary triplet machinery can be applied to the study of the operator $L$.
This approach was used to improve the above relation: it was first shown that
\[
\spec_\star L\setminus\Sigma=\big\{ 
z\notin\Sigma: \, \cos\sqrt z\in\spec_\star P\big\},
\quad
\star\in\{\text{p},\text{pp},\text{disc},\text{ess},\text{ac},\text{sc}\},
\]
see \cite[Section 3.5]{BGP08}, and later it was shown that for any interval
$J\subset\RR\setminus\Sigma$ the operator $\cos\sqrt{L_J}$ is unitarily equivalent to
$P_{\cos\sqrt J}$, see \cite[Theorem 17]{KP11}; here and below we use the notation
\[
\cos\sqrt J:=\{\cos\sqrt \lambda:\, \lambda\in J\}.
\]
Let us recall
how the boundary triplet machinery applies to this case and explain what kind of improvements can be obtained.

Denote by $S$ the restriction of $L$ to the functions $F$ for which $F(x)=0$ for all $x\in X^0$.
Clearly, this is a closed densely defined symmetric operator. The following assertions are proved in \cite{KP06}:
\begin{itemize}
\item the adjoint operator $S^*$ acts as $F\mapsto -F''$ on the domain
\[
\dom S^*=\Big\{
F\in \Hat H^2(X^1,m^1): \text{Eq. \eqref{eq-fcont} holds}
\Big\},
\]
\item  as a boundary triplet for $S$ one can take $(\cG,\Gamma,\Gamma')$ with
\[
\cG=\ell^2(X^0,m^0), \quad
\Gamma f:=\big( F(x)\big), \quad
\Gamma' f:=\Big(\dfrac{F'(x)}{m^0(x)}\Big),
\]
\item the associated $\gamma$-field is given by
\[
\big[\gamma(z)\xi\big] (xy,t)=\xi(x) \dfrac{\sin \big(\sqrt z(1-t)\big)}{\sin \sqrt z}+
\xi(y) \dfrac{\sin (\sqrt z t)}{\sin \sqrt z},
\]
\item the associated Weyl function takes the form
\[
M(z)=-\dfrac{\sqrt z}{\sin\sqrt z} \Big( \cos\sqrt z -P).
\]
\end{itemize}
It is easy the see that:
\begin{itemize}
\item $L$ is exactly the restriction of $S^*$ to $\ker\Gamma'$,
\item the restriction $L^0:=S^*\uhr\ker \Gamma$ is the direct sum  (over all edges $xy$)
of the Dirichlet Laplacians on $(0,1)$, and $\spec L^0=\Sigma$.
\end{itemize}

So we are in the situation covered by the present paper. Using the equality $n=2m'$
and applying Corollary \ref{corol11} we arrive at
the following result:
\begin{theorem}
For any interval $J\subset \RR\setminus\Sigma$ one has
$\cos\sqrt{L_J}=U P_{\cos\sqrt J}\, U^*$, where the operator
$U$ acts by
\[
U =\sqrt 2 \int_J \gamma(\lambda) dE_P(\cos\sqrt\lambda).
\]
and defines a unitary map from $\ran E_P(\cos\sqrt J)$ to $\ran L_J$.
\end{theorem}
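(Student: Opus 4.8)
The plan is to recognize the statement as a direct specialization of Corollary \ref{corol11} to the boundary triplet $(\cG,\Gamma,\Gamma')$ for the network Laplacian recalled just above, so that the whole proof reduces to matching the abstract data $(H^0,H,T,m,n)$ to the present concrete data and checking that the structural hypotheses behind \eqref{eq-mspec} are satisfied. Concretely, I would set $H^0:=L^0=S^*\uhr\ker\Gamma$ and $H:=L=S^*\uhr\ker\Gamma'$, and read off the identifications $T:=P$, $m(z):=\cos\sqrt z$ and $n(z):=-\sin\sqrt z/\sqrt z$ from the displayed Weyl function, for which $M(z)=\big(m(z)-T\big)/n(z)$ holds by inspection.

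Next I would verify the two standing assumptions. Since $\spec H^0=\spec L^0=\Sigma=\{(\pi n)^2:n\in\NN\}$, every interval $J\subset\RR\setminus\Sigma$ lies in a spectral gap of $H^0$, which is the gap hypothesis. For the representation \eqref{eq-mspec}: the transition operator $P$ is bounded and self-adjoint in $\cG=\ell^2(X^0,m^0)$ with $\|P\|\le 1$; the scalar functions $m$ and $n$ are entire in $z$, each being an even function of $\sqrt z$ and hence possessing a convergent power series in $z$, so in particular they are holomorph off $\spec H^0$; and $n(z)=0$ precisely when $\sin\sqrt z=0$, i.e. when $z\in\Sigma$, so $n$ is non-vanishing on $J$. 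Thus all hypotheses of the abstract theory are in force, and $\gamma$ is exactly the $\gamma$-field listed above.

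With this matching in place, the only genuine computation is the evaluation of the weight $\sqrt{n(\lambda)/m'(\lambda)}$ entering $\Phi(J)$. Differentiating $m(z)=\cos\sqrt z$ by the chain rule gives $m'(z)=-\sin\sqrt z/(2\sqrt z)$, whence $n(z)=2\,m'(z)$ and therefore $\sqrt{n(\lambda)/m'(\lambda)}=\sqrt 2$ on $J$. Substituting this constant into the integral defining $\Phi(J)$ in Lemma \ref{lem9} produces exactly the operator
\[
U=\sqrt 2\int_J \gamma(\lambda)\,dE_P(\cos\sqrt\lambda),
\]
and Corollary \ref{corol11} then yields at once that $U$ defines a unitary map from $\ran E_T\big(m(J)\big)=\ran E_P(\cos\sqrt J)$ onto $\ran E_H(J)=\ran L_J$, together with $m(H_J)=U\,T_{m(J)}\,U^*$, which in the present notation reads $\cos\sqrt{L_J}=U\,P_{\cos\sqrt J}\,U^*$. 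There is no substantive obstacle here beyond bookkeeping; the one point deserving care is the identity $n=2m'$, since it is precisely this collapse of the weight to the constant $\sqrt 2$ that produces the clean final formula for $U$.
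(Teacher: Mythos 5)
Your proposal is correct and takes essentially the same route as the paper, which obtains the theorem exactly by specializing Corollary \ref{corol11} to the stated boundary triplet with the identifications $T=P$, $m(z)=\cos\sqrt z$, $n(z)=-\sin\sqrt z/\sqrt z$, and the key identity $n=2m'$ collapsing the weight $\sqrt{n(\lambda)/m'(\lambda)}$ to the constant $\sqrt 2$. Your verification of the standing hypotheses (gap at $\Sigma=\spec L^0$, boundedness and self-adjointness of $P$, holomorphy of $m$ and $n$, and non-vanishing of $n$ off $\Sigma$) is the bookkeeping the paper leaves implicit, and it is accurate.
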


We note that the above operator $L$ is just a particular case of a differential operator 
acting on a network. It was shown in several papers that Weyl fonctions of the form
\eqref{eq-mspec} appear also in other cases, for example, for operators with magnetic
fields, more general boundary conditions, additional scalar potentials, we refer
e.g. to \cite{BGP07,KP06,KP11,P08} and to the references there-in. In all those cases
one obtains a similar result on a unitary equivalence between a differential operator
and a certain discrete operator: the operator $P$ becomes then a generalized
discrete Laplacian \cite{P08}, and the function $z\mapsto\cos\sqrt z$ should be replaced
by a certain analytic function expressed in terms of fundamental solutions \cite[Section 3.2]{KP11}.

\begin{remark}
The above operator $P$ is just one operator from a large family of Laplace-type operators associated with a graph.
There exists various unbounded counterparts for which the problem of self-adjoint extensions
has its own interest, cf. the recent works \cite{gol,jorg1}.
\end{remark}

\begin{remark}
The operators $L$ of the above type could be viewed as operator-valued ordinary differential
operators, whose study is rather active during the last years
\cite{vBM,GG,GWZ,MN,mogil}, but are not aware of any previous result giving a unitary equivalence
between such an operator and the coefficients in the respective boundary conditions.
\end{remark}

\section*{Acknowledgments}

The work was partially supported by ANR NOSEVOL 2011 BS01019 01
and GDR Dynamique quantique.

\end{document}